\DeclareMathAlphabet\mathbfcal{OMS}{cmsy}{b}{n}
\numberwithin{equation}{section}
\theoremstyle{plain} \newtheorem{theorem}{Theorem}[section]
\theoremstyle{plain} \newtheorem{proposition}[theorem]{Proposition}
\theoremstyle{plain} \newtheorem{lemma}[theorem]{Lemma}
\theoremstyle{plain} \newtheorem{corollary}[theorem]{Corollary}
\theoremstyle{definition} \newtheorem{definition}[theorem]{Definition}
\theoremstyle{definition} 
\theoremstyle{remark} \newtheorem{remark}[theorem]{Remark}
\theoremstyle{remark} \newtheorem{example}[theorem]{Example}
\renewcommand{\P}{\mathbf P}
\newcommand{\E}{\mathbf E}
\newcommand{\R}{\mathbb R}
\newcommand{\Rd}[1][d]{\R^{#1}}
\newcommand{\NN}{\mathbb N}
\newcommand{\salg}{\mathfrak{F}}
\newcommand{\fC}{\mathfrak{C}}
\newcommand{\ssalg}{\mathfrak{A}}
\newcommand{\sA}{\mathcal{A}}
\newcommand{\sF}{\mathcal F}
\newcommand{\sK}{\mathcal K}
\newcommand{\sM}{\mathcal M}
\newcommand{\sN}{\mathcal N}
\newcommand{\sP}{\mathcal P}
\newcommand{\one}{\mathbf{1}}
\newcommand{\risk}{\mathsf{r}}
\newcommand{\ve}{\mathsf{e}}
\newcommand{\gae}{\mathsf{g}}
\newcommand{\vE}{\mathbfcal{E}}
\newcommand{\gE}{\mathbfcal{G}}
\newcommand{\Lp}[1][p]{\mathsf{L}^{#1}}
\newcommand{\dint}{\mathrm{d}}
\newcommand{\emm}{\hat{\mu}_n}
\DeclareMathOperator{\conv}{conv}
\DeclareMathOperator{\cl}{cl}
\DeclareMathOperator{\esssup}{ess\,sup}
\DeclareMathOperator{\diam}{\mathrm{diam}}
\DeclareMathOperator{\Met}{\mathrm{M}}
\renewcommand{\phi}{\varphi}
\renewcommand{\epsilon}{\varepsilon}
\newcommand{\eps}{\varepsilon}
\newcommand{\Prob}[1]{\P\left\{ #1 \right\}}
\renewcommand{\subset}{\subseteq}
\renewcommand{\supset}{\supseteq}
\newlength{\querylen}
\begin{document}

\title{Convex bodies generated by sublinear expectations of random
  vectors\footnote{Supported by the Swiss National Science Foundation
    Grant No. 200021\_175584}}

\author{Ilya Molchanov \&\ Riccardo Turin\\
\normalsize Institute of Mathematical Statistics and Actuarial Science,
University of Bern\\
\normalsize ilya.molchanov@stat.unibe.ch, riccardo.turin@stat.unibe.ch}
\date{\today}

\maketitle




\begin{abstract}
  We show that many well-known transforms in convex geometry (in
  particular, centroid body, convex floating body, and Ulam floating
  body) are special instances of a general construction, relying on
  applying sublinear expectations to random vectors in Euclidean
  space. We identify the dual representation of such convex bodies and
  describe a construction that serves as a building block for all so
  defined convex bodies.  Sublinear expectations are studied in
  mathematical finance within the theory of risk measures.  In this
  way, tools from mathematical finance yield a whole variety of new
  geometric constructions.

  Keywords: centroid body; convex floating body; metronoid; quantile;
  risk measure; sublinear expectation; Ulam floating body

  MSC 2020: Primary: 60D05, Secondary: 52A27, 62G30, 91G70
\end{abstract}



\section{Introduction}
\label{sec:introduction}

The concept of sublinear expectation is essential in
mathematical finance, where it is used to quantify the operational
risk, see \cite{del12,foel:sch04}. The sublinearity property reflects
the financial paradigm, saying that the diversification decreases the
risk, and so the risk of a diversified portfolio is dominated by the sum
of the risks of its components.  Sublinear expectations are closely
related to solutions of backward stochastic differential equations,
see \cite{pen19}.

A sublinear expectation $\ve$ is a sublinear (positively homogeneous
and convex) map from the space $\Lp(\R)$ (or another linear space of
random variables) to $(-\infty,\infty]$, and so may be regarded as a
convex function on an infinite-dimensional space, see \cite{zal02} for
a thorough account of convex analysis tools in the
infinite-dimensional setting.

In this paper we use a sublinear expectation $\ve$ to associate with
each $p$-integrable random vector $\xi$ in $\R^d$ a convex closed set
$\vE_\ve(\xi)$ in $\R^d$. This is done by letting the support function
of $\vE_\ve(\xi)$ be the sublinear expectation $\ve$ applied to the
scalar product $\langle\xi,u\rangle$. For instance, if $\ve$ is the
$\Lp$-norm and $\xi$ is symmetric, $\vE_\ve(\xi)$ becomes the centroid
body associated to the distribution of $\xi$ as introduced by Petty
\cite{pet61} for $p=1$ and Lutwak and Zhang \cite{lut:zhan97} for a
general $p$. If $\ve$ is the average quantile of
$\langle\xi,u\rangle$, one obtains convex closed sets called
metronoids and studied by Huang and Slomka \cite{huan:slom18}. Further
examples are given by expected random polytopes, which also form a
special case of our construction.

We commence with Section~\ref{sec:subl-expect}, giving the definition
of sublinear expectation of random variables, explaining their dual
representation and presenting several examples. We mention the
particularly important Kusuoka representation which expresses any
law-determined sublinear expectation in terms of integrated quantiles
and describe a novel construction (called the maximum extension)
suitable to produce parametric families of sublinear expectations from
each given one.

Section~\ref{sec:constr-conv-bodi-2} presents our construction of
convex closed sets $\vE_\ve(\xi)$ generated by a random vector $\xi$
and a given sublinear expectation $\ve$.
Section~\ref{sec:star-shaped-sets} describes a generalisation based on
relaxing some properties of the underlying numerical sublinear
expectations, namely, replacing them with gauge
functions. This construction yields centroid bodies \cite{lut:zhan97}
and half-space depth-trimmed regions \cite{nag:sch:wer18}, the latter
are closely related to convex floating bodies introduced in
\cite{schut:wer90} and their weighted variant from \cite{MR3841844}.

One of the most important sublinear expectations is based on using
weighted integrals of the quantile function. The corresponding convex
bodies are studied in Section~\ref{sec:conv-bodi-gener}, where we show
their close connection to metronoids \cite{huan:slom18} and
zonoid-trimmed regions \cite{kos:mos97}. The Kusuoka representation of
numerical sublinear expectations yields
Theorem~\ref{th:representation}, which provides a representation of a
general convex set $\vE_\ve(\xi)$ (derived from $\xi$ using a
sublinear expectation $\ve$) in terms of Aumann integrals of
metronoids.
We further provide a uniqueness result for the distribution of $\xi$ on
the basis of a family of convex bodies generated by it, and also a
concentration result for random convex sets constructed from the
empirical distribution of $\xi$.

Section~\ref{sec:conv-bodi-gener-1} specialises our general
construction to the case when $\xi$ is uniformly distributed on a
convex body $K$ (that is, a compact convex set in $\R^d$ with nonempty
interior), and so $\vE_\ve(\xi)$ yields a transform
$K\mapsto \vE_\ve(K)=\vE_\ve(\xi)$. We derive several properties of
this transformation for general $\ve$, in particular, establish the
continuity of such maps 
in the Hausdorff metric.

In special cases, our construction yields $\Lp$-centroid bodies (see
\cite{lut:zhan97} and \cite[Sec.~10.8]{schn2}) and Ulam floating
bodies recently introduced in \cite{huan:slom:wer18}.  The latter form
a particularly important special setting, which is confirmed by
showing that all transformations $K\mapsto\vE_\ve(K)$ can be
expressed in terms of Ulam floating bodies. For instance,
Corollary~\ref{cor:centroid} provides a representation of the centroid
body of an origin symmetric $K$ as the convex hull of dilated Ulam
floating bodies of $K$.  In this course, results for sublinear
expectations yield a new insight into the well-known aforementioned
constructions of convex bodies, deliver some new relations between
them, and provide a general source of nonlinear transformations of
convex bodies.  Finally, we formulate several conjectures.

\section{Sublinear expectations of random variables}
\label{sec:subl-expect}

\subsection{Definition and dual representation}
\label{sec:definition}

Let $(\Omega,\salg,\P)$ be a nonatomic probability space, and let
$\Lp(\R^d)$ denote the family of all $p$-integrable random vectors in
$\R^d$, with $p\in[1,\infty]$. Endow $\Lp(\R^d)$ with the
$\sigma(\Lp,\Lp[q])$-topology, which is the weak-star topology based
on the pairing of $\Lp(\R^d)$ and $\Lp[q](\R^d)$ with
$p^{-1}+q^{-1}=1$, see \cite[Sec.~5.14]{alip:bor06}. Denote
$\R_+=[0,\infty)$.

The following definition amends the standard definition of sublinear
expectations of random variables (see, e.g., \cite{pen19}) by
including the extra lower semicontinuity property, which is often
additionally imposed.

\begin{definition}
  \label{def:subl}
  A \textit{sublinear expectation} is a function
  $\ve:\Lp(\R)\to(-\infty,\infty]$ with $p\in[1,\infty]$,
  satisfying the following properties for all $\beta,\beta'\in\Lp(\R)$:
  \begin{enumerate}[i)]
  \item monotonicity: $\ve(\beta)\leq\ve(\beta')$ if $\beta\leq\beta'$
    a.s.;
  \item translation equivariance: $\ve(\beta+a)=\ve(\beta) + a$ for all
    $a\in\R$, and $\ve(0)=0$;
  \item positive homogeneity: $\ve(c\beta)=c\ve(\beta)$ for all $c>0$;
  \item subadditivity: $\ve(\beta+\beta')\leq\ve(\beta)+\ve(\beta')$,
  \item lower semicontinuity in $\sigma(\Lp,\Lp[q])$, that is,
    \begin{equation*}
      \ve(\beta)\leq \liminf_{n\to\infty} \ve(\beta_n)
    \end{equation*}
    for each sequence $\{\beta_n,n\geq1\}$ converging to $\beta$ in
    the weak-star topology $\sigma(\Lp,\Lp[q])$.
  \end{enumerate}
\end{definition}

The sublinear expectation $\ve$ is often referred to as
\emph{numerical} one, in contrast with the set-valued expectation
introduced in Section~\ref{sec:constr-conv-bodi-2}.
The translation equivariance property implies that $\ve(a)=a$ for each
deterministic $a$. The sublinear expectation $\ve$ is said to be
\emph{finite} if it takes a finite value on all $\beta\in\Lp(\R)$.  

\begin{example}[Relation to coherent risk measures]
  For $\beta\in\Lp(\R)$, define $\risk(\beta)=\ve(-\beta)$. The
  obtained antimonotonic and subadditive function is called a
  \emph{coherent risk measure} of $\beta$, see \cite{del12} and
  \cite[Def.~4.5]{foel:sch04}. The negative of the risk is said to be
  a utility function, see \cite{del12}.

  A random variable $\beta$ is said to be acceptable if its risk is at
  most zero.  If $\beta$ is the financial position at the terminal
  time, its risk $\risk(\beta)$ yields the smallest amount $a$ of
  capital which should be reserved at the initial time to render
  $\beta+a$ acceptable; this amount may be negative if
  $\risk(\beta)<0$, and then capital can be released or
  invested.
  The subadditivity property of the risk
  (equivalently, of $\ve$) is the manifestation of the financial
  principle, saying that diversification decreases the risk. Many
  results from the theory of risk measures can be easily reformulated
  for sublinear expectations. For instance, from the theory of risk
  measures, it is known that the lower semicontinuity property always
  holds if $p\in[1,\infty)$ and $\ve$ takes only finite values, see
  \cite{kain:rues09}. 
\end{example}

While the following result is well known for risk measures
\cite[Cor.~4.18]{foel:sch04} and sublinear expectations
\cite[Th.~1.2.1]{pen19}, we provide its proof for completeness.

\begin{theorem}
  \label{thr:dual-numeric}
  A functional $\ve:\Lp(\R)\to(-\infty,\infty]$ is a sublinear
  expectation if and only if 
  \begin{equation}
    \label{eq:6}
    \ve(\beta)=\sup_{\gamma\in\sM_\ve,\E\gamma=1} \E(\gamma\beta),
  \end{equation}
  where $\sM_\ve$ is a convex $\sigma(\Lp[q],\Lp)$-closed cone in
  $\Lp[q](\R_+)$.
\end{theorem}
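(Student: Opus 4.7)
The plan is to prove the equivalence in two directions. The ``if'' direction amounts to checking the five axioms directly from the formula. The ``only if'' direction will proceed via the Fenchel--Moreau biconjugate theorem applied in the dual pairing $(\Lp,\Lp[q])$, followed by decoding axioms (i) and (ii) as pointwise constraints on the dual elements.

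For the forward direction, I would verify each axiom in turn assuming \eqref{eq:6}: monotonicity from $\gamma\geq0$; translation equivariance (and $\ve(0)=0$) from $\E\gamma=1$; positive homogeneity since the admissible set does not depend on $\beta$; subadditivity from $\sup(f+g)\leq\sup f+\sup g$; and $\sigma(\Lp,\Lp[q])$-lower semicontinuity because $\ve$ is a supremum of $\sigma(\Lp,\Lp[q])$-continuous linear functionals.

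For the converse, axioms (iii)--(v) make $\ve$ a proper, convex, $\sigma(\Lp,\Lp[q])$-lower semicontinuous function on $\Lp(\R)$. The Fenchel--Moreau theorem then gives
\[
\ve(\beta)=\sup_{\gamma\in\Lp[q](\R)}\bigl\{\E(\gamma\beta)-\ve^*(\gamma)\bigr\},
\qquad
\ve^*(\gamma)=\sup_{\beta\in\Lp(\R)}\bigl\{\E(\gamma\beta)-\ve(\beta)\bigr\}.
\]
Combined with positive homogeneity and $\ve(0)=0$, this forces $\ve^*$ to take only the values $0$ and $+\infty$, so $\ve^*$ is the indicator of the convex $\sigma(\Lp[q],\Lp)$-closed set
\[
\sA_\ve:=\bigl\{\gamma\in\Lp[q](\R):\E(\gamma\beta)\leq\ve(\beta)\text{ for all }\beta\in\Lp(\R)\bigr\}.
\]
Nonnegativity $\gamma\geq0$ a.s.\ for $\gamma\in\sA_\ve$ follows from (i) by testing against $\beta=-\one_A$; the normalization $\E\gamma=1$ follows from (ii) by testing against $\beta\equiv\pm1$ using $\ve(\pm1)=\pm1$. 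Setting $\sM_\ve:=\R_+\sA_\ve$ produces a convex cone whose slice $\{\E\gamma=1\}$ equals $\sA_\ve$, delivering \eqref{eq:6}.

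The main conceptual step is the biconjugacy; the one technical point requiring care is verifying that the cone $\sM_\ve$ is $\sigma(\Lp[q],\Lp)$-closed, as asserted in the statement. Given a net $t_\alpha\gamma_\alpha\to\eta$ with $t_\alpha\geq0$ and $\gamma_\alpha\in\sA_\ve$, testing against the constant $\beta\equiv1\in\Lp$ yields $t_\alpha=\E(t_\alpha\gamma_\alpha)\to\E\eta=:s$; for $s>0$ one divides to obtain $\gamma_\alpha\to s^{-1}\eta$, and closedness of $\sA_\ve$ places $s^{-1}\eta\in\sA_\ve$, whence $\eta=s(s^{-1}\eta)\in\sM_\ve$. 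For $s=0$, nonnegativity is preserved under weak-star limits (test against $\one_A$), and combined with $\E\eta=0$ this forces $\eta=0\in\sM_\ve$, completing the argument.
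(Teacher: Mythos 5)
Your argument is correct, but it follows a different route from the paper's. The paper works with the acceptance set $\sA=\{\beta:\ve(\beta)\leq0\}$, which is a weak-star closed convex cone by axioms (iii)--(v), forms its polar cone $\sA^o$ in $\Lp[q]$, invokes the bipolar theorem to recover $\sA=(\sA^o)^o$, and then unwinds $\ve(\beta)=\inf\{a:\beta-a\in\sA\}$ (a consequence of translation equivariance) into the supremum \eqref{eq:6} with $\sM_\ve=\sA^o$. You instead apply Fenchel--Moreau biconjugation directly to $\ve$, observe that positive homogeneity forces the conjugate $\ve^*$ to be the indicator of the set $\sA_\ve$ of dominated linear functionals, and then read off nonnegativity and the normalisation $\E\gamma=1$ from axioms (i) and (ii). The two arguments rest on the same separation principle (the bipolar theorem is Fenchel--Moreau for indicators of cones), but they distribute the work differently: in the paper the cone $\sM_\ve=\sA^o$ is weak-star closed automatically, being a polar cone, whereas your $\sM_\ve=\R_+\sA_\ve$ requires the explicit closedness verification you supply at the end --- and you carry it out correctly, including the degenerate case $\E\eta=0$, where nonnegativity forces $\eta=0$. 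Conversely, your route delivers the normalised slice $\sA_\ve=\{\gamma\in\sM_\ve:\E\gamma=1\}$ directly as the domain of $\ve^*$, while the paper has to pass from $\E(\gamma\beta)\leq a\E\gamma$ to the normalised supremum by rescaling. One trivial remark: properness of $\ve$ (needed for Fenchel--Moreau) uses $\ve(0)=0$ from axiom (ii), not only axioms (iii)--(v) as you state; this does not affect the argument.
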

\begin{proof}
  \textsl{Sufficiency} is easy to confirm by a direct check of the
  properties.
  
  \textsl{Necessity.} Let $\sA$ be the family of $\beta\in\Lp(\R)$,
  such that $\ve(\beta)\leq0$. The sublinearity property yields that
  $\sA$ is a convex cone. The lower semicontinuity property implies
  that this cone is weak-star closed. The polar cone to $\sA$ is
  defined as
  \begin{equation}
    \label{eq:9}
    \sA^o=\{\gamma\in\Lp[q](\R):\E(\gamma\beta)\leq 0\; \text{for all
    }\; \beta\in\sA\}. 
  \end{equation}
  Since $-\one_A\in\sA$ for the indicator of any event $A$, all random
  variables from $\sA^o$ are a.s.\ nonnegative.  The bipolar theorem
  from functional analysis (see, e.g., \cite[Th.~5.103]{alip:bor06})
  yields that $(\sA^o)^o=\sA$. Hence,
  \begin{align*}
    \ve(\beta)&=\inf\{a\in\R:(\beta-a)\in\sA\}\\
    &=\inf\{a\in\R:\E((\beta-a)\gamma)\leq 0\;\text{for all }\;
      \gamma\in\sA^o\}\\
    &=\inf\{a\in\R:\E(\gamma\beta)\leq a\E(\gamma)\;\text{for all }\;
      \gamma\in\sA^o\}.
  \end{align*}
  Thus, \eqref{eq:6} holds with $\sM_\ve=\sA^o$. 
\end{proof}

Representation \eqref{eq:6} is called the \emph{dual} representation
of $\ve$.  It is easy to see that each $\gamma$ in \eqref{eq:6} can be
chosen to be a function of $\beta$, namely, the conditional
expectations $\E(\gamma|\beta)$.

A sublinear expectation is said to be \emph{law-determined} (often
named law invariant) if it attains the same value on identically
distributed random variables, and this is the case for all examples
considered in this paper. In terms of the representation \eqref{eq:6},
this means that, for each $\gamma\in\sM_\ve$, the set $\sM_\ve$
contains all random variables sharing the same distribution with
$\gamma$.

A sublinear expectation is said to be \emph{continuous from below} if
it is continuous on all almost surely convergent increasing sequences
of random variables in $\Lp(\R)$.  It follows from \cite{kain:rues09}
that each finite sublinear expectation on $\Lp(\R)$ with
$p\in[1,\infty)$ is continuous from below. Every law-determined
continuous from below sublinear expectation on a nonatomic probability
space is \emph{dilatation monotonic}, meaning that
\begin{equation}
  \label{eq:8}
  \ve(\E(\beta|\ssalg))\leq \ve(\beta)
\end{equation}
for each sub-$\sigma$-algebra $\ssalg$ of $\salg$, see
\cite[Cor.~4.59]{foel:sch04}. In particular, $\E\beta\leq \ve(\beta)$
for all $\beta\in\Lp(\R)$.

\subsection{Average quantiles and the Kusuoka representation}
\label{sec:aver-quant-thek}

For a fixed value of $\alpha\in(0,1]$ and $\beta\in\Lp[1](\R)$, define
\begin{equation}
  \label{eq:3}
  \ve_\alpha(\beta)=\frac{1}{\alpha}
  \int_{1-\alpha}^{1}q_t(\beta)\dint t, 
\end{equation}
where 
\begin{equation}
  \label{eq:quantile}
  q_t(\beta)=\sup\{s\in\R : \Prob{\beta\leq s}< t\}
  =\inf\{s\in\R : \Prob{\beta\leq s}\geq t\}
\end{equation}
is the $t$-quantile of $\beta$. Because of integration, the choice of
a particular quantile in case of multiplicities is immaterial.  This
sublinear expectation is subsequently called the \emph{average
  quantile}. In particular, $\ve_1(\beta)=\E\beta$ is the mean. If
$\beta$ has a nonatomic distribution, then
$\ve_\alpha(\beta)=\E(\beta|\beta\geq q_{1-\alpha}(\beta))$.

The value of $\risk(\beta)=\ve_\alpha(-\beta)$ is obtained by
averaging the quantiles of $\beta$ at levels between 0 and
$\alpha$. This risk measure is well studied in finance and widely
applied in practice under the name of the average Value-at-Risk or
expected shortfall, see, e.g., \cite{acer:tas02}. By computing the
dual cone at \eqref{eq:9} or rephrasing the representation of the risk
measure $\ve_\alpha(-\beta)$ from \cite[Th.~4.1]{kain:rues09}, one can
derive the following dual representation
\begin{equation}
  \label{eq:4}
  \ve_{\alpha}(\beta)=\sup_{\gamma\in\Lp[\infty]([0,\alpha^{-1}]),\E\gamma=1} 
  \E(\gamma\beta).
\end{equation}
This immediately yields that the average quantiles satisfy all
properties imposed in Definition~\ref{def:subl}.
\medskip

Average quantiles form a building block for all other law-determined
sublinear expectations. The following result for risk measures is
known as the \emph{Kusuoka representation}: it was first obtained by
Kusuoka \cite{kusuoka2001} in case $p=\infty$ and can also be found in
\cite[Cor.~4.58]{foel:sch04} and \cite[Th.~32]{del12}; the
$\Lp$-variant follows from the Orlicz space version proved in
\cite{MR3778360}. For its validity, it is essential that the
probability space is nonatomic.

\begin{theorem}
  \label{thr:law-inv-rep}
  Each 
  law-determined sublinear expectation on $\Lp(\R)$
  with $p\in[1,\infty]$ can be represented as
  \begin{equation}
    \label{eq:12}
    \ve(\beta)=\sup_{\nu\in\sP_\ve}
    \int_{(0,1]}\ve_\alpha(\beta)\nu(\dint\alpha),
  \end{equation}
  where $\sP_\ve$ is the family of probability measures $\nu$ on
  $(0,1]$ such that
  $\int_{(0,1]}\ve_\alpha(\beta)\nu(\dint\alpha)\leq 0$ whenever
  $\ve(\beta)\leq0$.
\end{theorem}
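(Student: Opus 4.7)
The plan is to start from the dual representation \eqref{eq:6} of Theorem~\ref{thr:dual-numeric} and exploit law-invariance to rewrite the inner expectation in terms of quantile functions, which in turn admit a natural integral decomposition into the extremal quantiles that correspond to the average quantile functionals $\ve_\alpha$.

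First, I would couple the formula $\ve(\beta)=\sup_{\gamma\in\sM_\ve,\,\E\gamma=1}\E(\gamma\beta)$ with the Hardy--Littlewood--P\'olya rearrangement inequality
\[
\E(\gamma\beta)\leq \int_0^1 q_t(\gamma)\,q_t(\beta)\,\dint t.
\]
Since $\ve$ is law-determined, its zero-level cone $\sA=\{\beta:\ve(\beta)\leq0\}$ is invariant under equidistribution, and hence so is its polar cone $\sM_\ve=\sA^o$. Because $\Omega$ is nonatomic, for each $\gamma\in\sM_\ve$ one can construct a measure-preserving rearrangement $\tilde\gamma$ with the same distribution as $\gamma$ and comonotonic with $\beta$; equality then holds in the rearrangement inequality and $\tilde\gamma$ still belongs to $\sM_\ve$. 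Thus
\[
\ve(\beta)=\sup_{\gamma\in\sM_\ve,\,\E\gamma=1}\int_0^1 q_t(\gamma)\,q_t(\beta)\,\dint t.
\]

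Next I would establish the extremal representation of the quantile function of a nonnegative mean-one random variable. For any $\gamma\in\Lp[q](\R_+)$ with $\E\gamma=1$, the function $t\mapsto q_t(\gamma)$ is nondecreasing on $[0,1]$ and integrates to $1$. An integration-by-parts against $f(s)=q_{1-s}(\gamma)$ produces the representation
\[
q_t(\gamma)=\int_{(0,1]}\alpha^{-1}\one_{[1-\alpha,1]}(t)\,\nu_\gamma(\dint\alpha)
\]
with the probability measure $\nu_\gamma(\dint\alpha)=-\alpha\,\dint f(\alpha)$ on $(0,1]$; the total mass equals $1$ by Fubini and the normalisation $\E\gamma=1$. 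This identifies the step quantiles $\alpha^{-1}\one_{[1-\alpha,1]}$, which are the quantile functions of $\alpha^{-1}\one_A$ with $\P(A)=\alpha$, as the extreme generators. Substituting and using Fubini once more,
\[
\int_0^1 q_t(\gamma)q_t(\beta)\,\dint t
=\int_{(0,1]}\alpha^{-1}\!\!\int_{1-\alpha}^1 q_t(\beta)\,\dint t\;\nu_\gamma(\dint\alpha)
=\int_{(0,1]}\ve_\alpha(\beta)\,\nu_\gamma(\dint\alpha).
\]

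Setting $\sP_\ve$ to be the image of $\{\gamma\in\sM_\ve:\E\gamma=1\}$ under $\gamma\mapsto\nu_\gamma$, the previous identities yield \eqref{eq:12}. To match $\sP_\ve$ with the description in the statement, I would rerun the bipolar argument used in the proof of Theorem~\ref{thr:dual-numeric} at the level of measures on $(0,1]$: since $\ve$ is recovered from $\sA$ by $\ve(\beta)=\inf\{a\in\R:\beta-a\in\sA\}$, a measure $\nu$ is admissible exactly when $\int\ve_\alpha(\beta)\,\nu(\dint\alpha)\leq 0$ for all $\beta\in\sA$. The main obstacle is carrying out the extremal representation in a topology in which $\sP_\ve$ is compact and the supremum in \eqref{eq:12} is attained: for $p=\infty$ this is essentially tight on the unit ball of $\Lp[1]$ as in Kusuoka's original argument, but for general $p\in[1,\infty)$ one must work in the Orlicz-space framework of \cite{MR3778360} to control the integrability of the unbounded densities $\alpha^{-1}\one_A$ as $\alpha\to 0$.
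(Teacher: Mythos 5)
The paper does not actually prove Theorem~\ref{thr:law-inv-rep}: it defers to Kusuoka's original argument for $p=\infty$, to \cite[Cor.~4.58]{foel:sch04} and \cite[Th.~32]{del12}, and to the Orlicz-space version in \cite{MR3778360} for general $\Lp$. Your proposal reconstructs precisely that classical argument: dual representation, law-invariance of $\sM_\ve=\sA^o$, the Hardy--Littlewood upper bound with equality at a comonotone rearrangement (which exists because the space is nonatomic, writing $\beta=q_U(\beta)$ and $\tilde\gamma=q_U(\gamma)$ for a uniform $U$), and the decomposition of the nondecreasing quantile function of $\gamma$ into the extreme ``step'' densities $\alpha^{-1}\one_{[1-\alpha,1]}$ whose pairing with $q_\cdot(\beta)$ gives $\ve_\alpha(\beta)$. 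This is the right proof and matches the construction of $\nu_\gamma$ that the paper itself records after the theorem statement; so the route is the one the cited sources take, not a new one.

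Two points need tightening. First, the measure $\nu_\gamma(\dint\alpha)=-\alpha\,\dint f(\alpha)$ on $(0,1]$ with $f(s)=q_{1-s}(\gamma)$ has total mass $\int_0^1 f(\alpha)\dint\alpha-f(1)=1-\esssup(-\gamma)\cdot(-1)=1-\operatorname{ess\,inf}\gamma$, not $1$; you must add an atom $\nu_\gamma(\{1\})=\operatorname{ess\,inf}\gamma$, which accounts for the constant part of $q_\cdot(\gamma)$ via $\ve_1(\beta)=\E\beta$ (this is exactly the term $\nu(\{1\})=\gamma(0)$ in the paper's own description). Second, the closing worry about compactness of $\sP_\ve$ and attainment of the supremum is a red herring: the theorem asserts only a supremum, and the matching of your image set with the $\sP_\ve$ of the statement does not require a bipolar argument in the space of measures. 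The inclusion ``image $\subset\sP_\ve$'' follows from $\int\ve_\alpha(\beta)\nu_\gamma(\dint\alpha)=\sup_{\gamma'\od\gamma}\E(\gamma'\beta)\le\ve(\beta)$, and the reverse inequality $\sup_{\nu\in\sP_\ve}\int\ve_\alpha(\beta)\nu(\dint\alpha)\le\ve(\beta)$ for the statement's larger $\sP_\ve$ is immediate from translation equivariance applied to $\beta-\ve(\beta)$, since each $\ve_\alpha$ is translation equivariant and $\nu$ has total mass one. With these repairs the argument is complete for all $p\in[1,\infty]$; the integrability needed for Fubini is supplied by H\"older applied to the rearrangements, $\int_0^1 q_t(\gamma)|q_t(\beta)|\dint t\le\|\gamma\|_q\|\beta\|_p$.
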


It is possible to show that $\ve$ is finite on $\Lp(\R)$ if and only
if the function $t\mapsto \int_{(t,1]}s^{-1}\nu(\dint s)$ is
$q$-integrable on $(0,1]$ with respect to the Lebesgue measure for all
$\nu\in\sP_\ve$.
If $\ve$ is finite and $p\in[1,\infty)$, one can provide a
constructive representation of $\sP_\ve$ in terms of the extremal
points of the set $\sM^1_\ve=\{\gamma\in\sM_\ve: \E\gamma=1\}$, where
$\sM_e$ is defined in \eqref{eq:6}. The case $p=\infty$ requires extra
arguments, since a norm bounded set in $\Lp[1]$ is not necessarily
weakly compact, hence, the supremum in \eqref{eq:6} is not necessarily
attained.  Since $(\Omega,\sF,\P)$ is nonatomic, we can assume without
loss of generality that $\Omega$ is the interval $[0,1]$ equipped with
its Borel $\sigma$-algebra and the Lebesgue measure $\P$. Let
$\gamma:[0,1)\to [0,\infty)$ be a nondecreasing right-continuous
function that is extremal in $\sM^1_\ve$.  Define the probability
measure $\nu_\gamma$ on $(0,1]$ by letting
\begin{displaymath}
  \nu_\gamma((0,\alpha))=\int_{[1-\alpha,1)}(\gamma(t)-\gamma(1-\alpha))\dint t.
\end{displaymath}
and $\nu(\{1\})=\gamma(0)$. 
It is shown in \cite{shapiro2013} that $\sP_\ve$ can be chosen to be
the set of $\nu_\gamma$ for the family of all right-continuous
nondecreasing functions $\gamma$ which are extremal in $\sM^1_\ve$.

\subsection{Examples of sublinear expectations}
\label{sec:exampl-subl-expect}

A simple example of a sublinear expectation is provided by the
essential supremum
\begin{displaymath}
  \ve(\beta)=\esssup \beta, 
\end{displaymath}
which is finite for all $\beta\in\Lp[\infty](\R)$.  If
$\alpha\downarrow0$, then the average quantile $\ve_\alpha(\beta)$
increases to the (possibly, infinite) value $\ve_0(\beta)$, which is
equal to the essential supremum of $\beta$.  Next, we discuss more
involved constructions of sublinear expectations.

\begin{example}[Spectral sublinear expectation]
  \label{ex:spectral}
  Let $\phi:(0,1]\to\R_+$ be a nonincreasing function such that
  $\int_0^1\phi(t)\dint t=1$, $\phi$ is called a spectral function. Then 
  \begin{equation}
  \label{eq:spec}
    \ve_{\int\!\!\phi}(\beta)=\int_{(0,1]} q_{1-t}(\beta)\phi(t)\dint t
  \end{equation}
  is called a spectral sublinear expectation, see \cite{acer02} for
  the closely related definition of the spectral risk measure. By
  Fubini's theorem, $\ve_{\int\!\!\phi}$ admits the following
  equivalent representation
  \begin{equation}
  \label{eq:spec_aver}
    \ve_{\int\!\!\phi}(\beta)
    =\int_{(0,1]} \ve_\alpha(\beta)\nu(\dint\alpha),
  \end{equation}
  where $\ve_\alpha$ is given by \eqref{eq:3} and $\nu$ is the
  probability measure on $(0,1]$ with
  \begin{equation}
    \label{eq:13}
    \phi(t)=\int_{(t,1]}s^{-1}\nu(\dint s), \quad t\in(0,1].
  \end{equation}
  Conversely, for any probability measure $\nu$ on $(0,1]$,
  \eqref{eq:spec_aver} yields a spectral sublinear expectation.
  The set $\sP_\ve$ in the Kusuoka representation of
  $\ve_{\int\!\!\phi}(\beta)$ consists of the single probability
  measure $\nu$, so the right-hand side \eqref{eq:12} is the supremum
  over a family of spectral sublinear expectations. 
\end{example}

\begin{example}[One-sided moments]
  \label{ex:fischer}
  The $\Lp$-norm $\|\beta\|_p$ satisfies all properties of a sublinear
  expectation but the monotonicity and translation equivariance. It is
  possible to come up with a norm-based sublinear expectation on
  $\Lp(\R)$ with $p\in[1,\infty)$ by letting
  \begin{equation}
    \label{eq:norm}
    \ve_{p,a}(\beta)=\E\beta+ a\big(\E(\beta-\E \beta)_+^p\big)^{1/p}
  \end{equation}
  with $a\in[0,1]$, where $x_+=\max(x,0)$ denotes the positive part of
  $x\in\R$. The corresponding risk measure was introduced in
  \cite{fis03}. Note that $\ve_{p,a}(\beta)=\frac{a}{2}\|\beta\|_p$ if
  $\beta$ is symmetric. Translation equivariance and positive
  homogeneity of $\ve_{p,a}$ are obvious.  The subadditivity of the
  second term follows from $(t+s)_+\leq (t)_++(s)_+$ and the
  subadditivity of the $\Lp$-norm. To prove the monotonicity, we first
  observe that since $\ve_{p,a}$ is subadditive, we only need to show
  that $\ve_{p,a}(\gamma)\leq 0$ for any almost surely negative integrable
  $\gamma$. Indeed, substituting $(\gamma-\E\gamma)_+\leq -\E\gamma$
  in \eqref{eq:norm} implies that
  $\ve_{p,a}(\gamma)\leq\E\gamma-a\E\gamma\leq 0$.
  
  The sublinear expectation given by \eqref{eq:norm} admits the dual
  representation \eqref{eq:6} with the cone $\sM_\ve$ generated by the
  family of random variables $\gamma=1+a(\zeta-\E\zeta)$ for all
  $\zeta\in\Lp[q](\R_+)$ with $\|\zeta\|_q\leq 1$, see
  \cite[p.~46]{del12}.
  The family $\sP_\ve$ from \eqref{eq:12} is
  explicitly known only for $p=1$; it consists of probability measures
  obtained as $(1-at)\delta_1+at\delta_t$, which is the weighted sum
  of the Dirac measures at $1$ and $t$ for $t\in[0,1]$. Then
  \begin{equation}
    \label{eq:11}
    \ve_{1,a}(\beta)=\sup_{t\in[0,1]}
    \Big[(1-at)\E\beta+at\ve_t(\beta)\Big]
    =\E\beta + a\sup_{t\in[0,1]}t\ve_t(\beta-\E\beta). 
  \end{equation}
  Recall in this relation that
  \begin{displaymath}
    t\ve_t(\beta)=\int_{1-t}^1 q_s(\beta)\dint s,
  \end{displaymath}
  so that the supremum on the right-hand side of \eqref{eq:11} is
  indeed the expectation of $(\beta-\E\beta)_+$. 
\end{example}

\begin{example}[Expectile]
  \label{ex:expectile}
  Following \cite{bel:klar:mul:14}, define the \emph{expectile}
  $\ve_{[\tau]}(\beta)$ of a random variable $\beta\in\Lp[1](\R)$ at level
  $\tau\in (0,1)$ as the (necessarily, unique) solution  $x\in\R$ of
  \begin{displaymath}
    \tau\E (\beta-x)_+=(1-\tau)\E(x-\beta)_+.
  \end{displaymath}
  If $\tau\in[1/2,1)$, then the expectile is a sublinear expectation,
  see \cite{bel:klar:mul:14}. For $\tau=1/2$, we obtain the mean of
  $\beta$. For $\tau\in[1/2,1)$, the dual representation holds with
  $\sM_\ve$ being the set of $\gamma\in\Lp[\infty](\R_+)$ such that
  the ratio between the essential supremum and the essential infimum of
  $\gamma$ is at most $\tau/(1-\tau)$. The Kusuoka representation
  holds with 
  \begin{displaymath}
    \ve_{[\tau]}(\beta)
    =\sup_{t\in[0,2-1/\tau]}
    \Big[(1-t)\E\beta+t\ve_{\frac{(1-\tau)t}{(2\tau-1)(1-t)}}(\beta)\Big]. 
  \end{displaymath}
\end{example}

\subsection{Maximum extension}
\label{sec:maximum-extension}

Let $\ve$ be a law-determined sublinear expectation on $\Lp(\R)$ with
$p\in[1,\infty]$.  The following construction suggests a way of
extending $\ve$ to a monotone parametric family of sublinear
expectations.  For a fixed $m\geq1$, define
\begin{equation}
  \label{eq:max-gen}
  \ve^{\vee m}(\beta)=\ve(\max(\beta_1,\dots,\beta_m)),
\end{equation}
where $\beta_1,\dots,\beta_m$ are independent copies of
$\beta\in\Lp(\R)$. All properties in Definition~\ref{def:subl} are
straightforward and we refer to this sublinear expectation as the
\emph{maximum extension} of $\ve$. Let us stress that this extension
applies only to law-determined sublinear expectations. 

It is possible to obtain a family of such expectations
$\ve^{\vee(\lambda)}$ continuously parametrised by
$\lambda\in(0,1]$. For this, $m$ is replaced by a geometrically
distributed random variable $N$ with parameter $\lambda$, that is,
$\Prob{N=k}=(1-\lambda)^{k-1}\lambda$, $k\geq1$. Define
\begin{displaymath}
  \ve^{\vee(\lambda)}=\ve(\max(\beta_1,\dots,\beta_N)), \quad \lambda\in(0,1].
\end{displaymath}
This family of sublinear expectations interpolates between
$\ve^{\vee (1)}(\beta)=\ve(\beta)$ and $\ve^{\vee (0)}(\beta)$ which
is set to be $\esssup\beta$.

\begin{example}
  \label{ex:max-ext}
  The maximum extension can be applied to the average quantile risk
  measure $\ve_\alpha$; the result is denoted by $\ve^{\vee
    m}_\alpha$.  For  $\alpha=1$, we obtain the \emph{expected maximum}
  \begin{equation}
    \label{eq:max}
    \ve^{\vee m}_1(\beta)=\E\max(\beta_1,\dots,\beta_m).
  \end{equation}
  Note that
  \begin{align*}
    \ve^{\vee m}_1(\beta)=\int_0^1 q_t
    (\max\{\beta_1,\dots,\beta_m\})\dint t
    =\int_0^1 q_{t^\frac{1}{m}}(\beta) \dint t
    =m\int_0^1 t^{m-1} q_t(\beta)\dint t\,.
  \end{align*}
  For $m\geq2$, $\ve^{\vee m}_1$ is the spectral sublinear expectation
  given at \eqref{eq:spec} with $\phi(t)=m(1-t)^{m-1}$, equivalently,
  \eqref{eq:spec_aver} with
  $\nu(\dint t)=m(m-1)t(1-t)^{m-2}\dint t$.
  Similar calculations yield that
  \begin{multline}
    \label{eq:alpha-m}
    \ve_\alpha^{\vee m}(\beta)=\ve_\alpha(\max(\beta_1,\dots,\beta_m))
    =\frac{m(m-1)}{\alpha}
    \int _0^{1-(1-\alpha)^{1/m}} t(1-t)^{m-2}\ve_t(\beta)\dint t\\
    +\frac{m}{\alpha}(1-\alpha)^{(m-1)/m}(1-(1-\alpha)^{1/m})
    \ve_{1-(1-\alpha)^{1/m}}(\beta).
  \end{multline}
\end{example}

\section{Measure-generated convex sets}
\label{sec:constr-conv-bodi-2}

Fix a law-determined sublinear expectation $\ve$ on $\Lp(\R)$,
$p\in[1,\infty]$.  For a $p$-integrable probability measure $\mu$ on
$\R^d$, equivalently, for a random vector $\xi\in\Lp(\R^d)$ with
distribution $\mu$, define
\begin{equation}
  \label{eq:support-function}
  h(u)=\ve(\langle\xi,u \rangle), \quad u\in \R^d,
\end{equation}
where $\langle\xi,u\rangle$ denotes the scalar product in $\R^d$.  The
function $h$ is subadditive
\begin{displaymath}
  h(u+u')=\ve(\langle\xi ,u+u'\rangle)
  \leq\ve(\langle\xi ,u\rangle) + \ve(\langle\xi ,u'\rangle)
  =h(u)+h(u'),
\end{displaymath}
and homogeneous
\begin{displaymath}
  h(cu)=\ve(\langle\xi ,cu \rangle)
  =c\ve(\langle\xi ,u \rangle)=ch(u),\quad  c\geq 0.
\end{displaymath}
Furthermore, $h$ is lower semicontinuous, since $\langle\xi ,u_n
\rangle\to\langle\xi ,u \rangle$ in $\sigma(\Lp,\Lp[q])$ if $u_n\to u$
as $n\to\infty$ and $\ve$ is assumed to be lower semicontinuous.
These three properties identify support functions of convex closed
sets, see \cite[Th.~1.7.1]{schn2}.  Therefore, there exists a
(possibly, unbounded) convex closed set $F$ such that its support function
\begin{displaymath}
  h(F,u)=\sup\{\langle x,u\rangle:\; x\in F\}
\end{displaymath}
is given by \eqref{eq:support-function}.  This set is denoted by
$\vE_\ve(\xi)$ or $\vE_\ve(\mu)$. The construction can be summarised by the
equality
\begin{equation}
  \label{eq:10}
  h(\vE_\ve(\xi),u)=\ve(\langle\xi ,u\rangle), \quad u\in\R^d. 
\end{equation}

The following result shows that $\vE_\ve(\xi)$ is a set-valued sublinear
function of $\xi$, called a \emph{set-valued sublinear expectation}
generated by $\ve$.
In other instances, we pass to $\vE_\ve$ the sub- and superscripts of
$\ve$, e.g., $\vE_{[\tau]}$ is obtained by choosing $\ve$ to be the
expectile $\ve_{[\tau]}$.

For convex closed sets $F,F'$, their (closed) Minkowski sum $F+F'$ is
the closure of $\{x+x':\; x\in F,x'\in F'\}$, and the dilation of $F$
by $c>0$ is $cF=\{cx:\; x\in F\}$.

\begin{theorem}
  \label{thr:subl-vE}
  Fix $p\in[1,\infty]$ and a law-determined sublinear expectation
  $\ve$ defined on $\Lp(\R)$.  The corresponding map $\vE_\ve$ (given at
  \eqref{eq:10}) from $\Lp(\R^d)$ to the family of convex closed sets
  in $\R^d$ satisfies the following properties:
  \begin{enumerate}[i)]
  \item monotonicity: if $\xi\in F$ a.s. for a convex closed $F$, then
    $\vE_\ve(\xi)\subset F$;
  \item singleton preserving: $\vE_\ve(a)=\{a\}$ for all deterministic
    $a$;
  \item affine equivariance $\vE_\ve(A\xi+a)=A\vE_\ve(\xi)+a$ for all matrices
    $A$ and $a\in\R^d$;
  \item subadditivity: $\vE_\ve(\xi+\eta)\subset \vE_\ve(\xi)+\vE_\ve(\eta)$;
  \item lower semicontinuity of support functions, that is,
    $h(\vE_\ve(\xi),u)\leq \liminf_{n\to\infty} h(\vE_\ve(\xi_n),u)$ for all
    $u\in\R^d$ if $\xi_n\to\xi$ in $\sigma(\Lp,\Lp[q])$;
  \item if $\ve(\beta)$ is finite for all $\beta\in\Lp(\R)$, then the
    map $\xi\mapsto\vE_\ve(\xi)$ is continuous in the Hausdorff metric
    (see, \cite[Sec,~1.8]{schn2}) with respect to the norm on $\Lp$;
  \item if $\ve$ is continuous from below, then $\vE_\ve(\xi)$ contains
    the expectation $\E\xi$.
  \end{enumerate}
\end{theorem}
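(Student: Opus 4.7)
The plan is to verify each of the seven properties separately, essentially by translating the corresponding property of the numerical $\ve$ (from Definition~\ref{def:subl}) into a statement about the support function $h(\vE_\ve(\xi),\cdot)=\ve(\langle\xi,\cdot\rangle)$, and then using the one-to-one correspondence between convex closed sets and their support functions (\cite[Th.~1.7.1]{schn2}).

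For (i), I would use that $\xi\in F$ a.s.\ gives $\langle\xi,u\rangle\leq h(F,u)$ a.s., so monotonicity and translation equivariance of $\ve$ yield $h(\vE_\ve(\xi),u)=\ve(\langle\xi,u\rangle)\leq h(F,u)$, hence the inclusion. Property (ii) follows because $\ve(\langle a,u\rangle)=\langle a,u\rangle$ by translation equivariance applied to the constant $\langle a,u\rangle$. For (iii), expand $\ve(\langle A\xi+a,u\rangle)=\ve(\langle\xi,A^{\!\top}u\rangle)+\langle a,u\rangle=h(\vE_\ve(\xi),A^{\!\top}u)+\langle a,u\rangle$, which is exactly $h(A\vE_\ve(\xi)+a,u)$. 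Property (iv) is a direct use of subadditivity of $\ve$, and property (v) is immediate from the assumed lower semicontinuity of $\ve$ together with the continuity of the scalar product in the weak-star topology. Property (vii) is also short: dilatation monotonicity \eqref{eq:8} applied to the trivial sub-$\sigma$-algebra gives $\E\beta\leq \ve(\beta)$, so $\langle\E\xi,u\rangle=\E\langle\xi,u\rangle\leq \ve(\langle\xi,u\rangle)=h(\vE_\ve(\xi),u)$ for every $u\in\R^d$, forcing $\E\xi\in\vE_\ve(\xi)$.

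The one step requiring a bit of thought is (vi). If $\ve$ is finite on $\Lp(\R)$, I would first argue that the set $\sM_\ve^1$ in the dual representation \eqref{eq:6} is norm-bounded in $\Lp[q](\R)$: finiteness of $\ve(\beta)=\sup_{\gamma\in\sM_\ve^1}\E(\gamma\beta)$ for every $\beta$ together with the Banach--Steinhaus theorem yields a constant $C$ with $\|\gamma\|_q\leq C$ for all $\gamma\in\sM_\ve^1$, so $\ve(\beta)\leq C\|\beta\|_p$. Subadditivity then gives
\begin{equation*}
|\ve(\langle\xi_n,u\rangle)-\ve(\langle\xi,u\rangle)|
\leq \max\bigl(\ve(\langle\xi_n-\xi,u\rangle),\ve(\langle\xi-\xi_n,u\rangle)\bigr)
\leq C\,\|u\|\,\|\xi_n-\xi\|_p.
\end{equation*}
Since finiteness of the support function on all of $\R^d$ forces boundedness of $\vE_\ve(\xi)$, the set is compact convex, and the Hausdorff distance between two such sets equals the uniform distance of their support functions on the unit sphere; the displayed bound then yields $d_H(\vE_\ve(\xi_n),\vE_\ve(\xi))\to 0$ as $\|\xi_n-\xi\|_p\to 0$.

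The main obstacle will be justifying the norm-boundedness of $\sM_\ve^1$ cleanly; once this ``finite sublinear plus lower semicontinuity implies Lipschitz'' step is in place, the rest of (vi) and indeed all other items are routine rewordings of the defining properties of $\ve$ through the support-function dictionary.
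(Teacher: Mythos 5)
Your argument is correct, and for items (i)--(v) and (vii) it coincides with the paper's proof: each is the direct translation of the corresponding axiom of $\ve$ through the support-function dictionary, with (vii) obtained from dilatation monotonicity \eqref{eq:8} applied to the trivial $\sigma$-algebra. The one genuine divergence is item (vi). The paper disposes of it by citing the Extended Namioka Theorem of \cite{bia:fri09}, which asserts that any finite sublinear expectation is automatically norm-continuous (no lower semicontinuity needed), and then transfers uniform convergence of support functions on the sphere to Hausdorff convergence via \cite[Th.~1.8.15]{schn2}. You instead make the continuity quantitative: since lower semicontinuity is part of Definition~\ref{def:subl}, the dual representation \eqref{eq:6} of Theorem~\ref{thr:dual-numeric} is available, the functionals $\beta\mapsto\E(\gamma\beta)$, $\gamma\in\sM_\ve$ with $\E\gamma=1$, are pointwise bounded (by $\max(\ve(\beta),\ve(-\beta))<\infty$), and the uniform boundedness principle yields $\sup\|\gamma\|_q=C<\infty$, hence $|\ve(\beta)-\ve(\beta')|\le C\|\beta-\beta'\|_p$ and a Lipschitz bound on the sublinear transform. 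This is a legitimate self-contained alternative; it buys an explicit Lipschitz constant (generalising the $1$-Lipschitz remark the paper makes for $p=\infty$), at the price of leaning on the lsc axiom, whereas the Namioka route would work even without it. Both arguments are sound, so I see no gap.
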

\begin{proof}
  Property (i) holds since $\langle\xi,u\rangle\leq h(F,u)$ and in
  view of the monotonicity property of $\ve$.
  Property (ii) directly follows from the construction, and, for the
  affine equivariance, note that
  \begin{displaymath}
    h(\vE_\ve(A\xi+a),u)=\ve(\langle \xi,A^\top u\rangle)
    +\langle a,u\rangle
    =h(\vE_\ve(\xi),A^\top u)+\langle a,u\rangle
    =h(A\vE_\ve(\xi)+a,u).
  \end{displaymath}
  The subadditivity follows from
  \begin{displaymath}
    h(\vE_\ve(\xi+\eta),u)
    =\ve(\langle\xi+\eta,u\rangle)
    \leq \ve(\langle \xi,u\rangle)+\ve(\langle\eta,u\rangle)
    =h(\vE_\ve(\xi),u)+h(\vE_\ve(\eta),u). 
  \end{displaymath}
  
  If $\xi_n\to\xi$ in $\sigma(\Lp(\R^d),\Lp[q](\R^d))$, then
  $\langle\xi_n,u\rangle\to\langle\xi,u\rangle$ in
  $\sigma(\Lp(\R),\Lp[q](\R))$. By the lower semicontinuity of $\ve$,
  \begin{displaymath}
    \ve(\langle\xi,u\rangle)\leq \liminf_{n\to\infty}
    \ve(\langle\xi_n,u\rangle). 
  \end{displaymath}
  This implies the lower semicontinuity of the support functions. 

  Property (vi) follows from the Extended Namioka Theorem, which says
  that every finite sublinear expectation is continuous with respect
  to the norm topology, see \cite{bia:fri09}. Recall that sublinear
  expectations on $\Lp[\infty]$ is also Lipschitz. Hence,
  $\ve(\langle\xi_n,u\rangle)\to \ve(\langle\xi,u\rangle)$ if
  $\xi_n\to\xi$ in $\Lp$. The convergence of support functions implies
  the convergence of the corresponding sets in the Hausdorff metric,
  see \cite[Th.~1.8.15]{schn2}.
  
  Finally, (vii) is a consequence of the dilatation monotonicity
  property \eqref{eq:8}.
\end{proof}

\begin{example}
  If $\ve$ is the essential supremum, then $\vE_\ve(\xi)$ equals the
  closed convex hull of the support of $\xi$.
\end{example}

If $p=\infty$, then an easy argument shows that the map
$\xi\mapsto\vE_\ve(\xi)$ between $\Lp[\infty](\R^d)$ and the family of
convex compact sets in $\R^d$ is 1-Lipschitz, that is, the Hausdorff
distance between $\vE_\ve(\xi)$ and $\vE_\ve(\eta)$ is at most
$\|\xi-\eta\|_\infty$ for all $\xi,\eta\in\Lp[\infty](\R^d)$. Indeed,
\begin{align*}
  h(\vE_\ve(\xi),u)-h(\vE_\ve(\eta),u)
  &=\ve(\langle\xi,u\rangle)-\ve(\langle\eta,u\rangle)\\
  &\leq \ve(\langle\eta,u\rangle + \|\xi-\eta\|_\infty)
  -\ve(\langle\eta,u\rangle)=\|\xi-\eta\|_\infty
\end{align*}
for all unit $u\in\R^d$. 

If $\xi,\eta\in\Lp(\R^d)$ and $\E(\eta|\xi)=0$ a.s., then the
dilatation monotonicity property~\eqref{eq:8} implies that
\begin{displaymath}
  \vE_\ve(\xi+\eta)\supset \vE_\ve(\E(\xi+\eta|\xi))=\vE_\ve(\xi). 
\end{displaymath}
Hence, if $\xi_1,\xi_2,\ldots$ is a sequence of i.i.d.\ centred
$p$-integrable random vectors, then $\vE_\ve(\xi_1+\cdots+\xi_n)$,
$n\geq1$, is a growing sequence of nested convex sets in $\R^d$.

\begin{remark}
  \label{rem:order}
  If $\xi$ is dominated by $\eta$ in the \emph{convex order}, meaning
  that $\E f(\xi)\leq \E f(\eta)$ for all convex functions $f$, then
  $\vE_\ve(\xi)\subset \vE_\ve(\eta)$, see \cite[Cor.~4.59]{foel:sch04}. In
  particular, the sequence $\vE_\ve(\xi_n)$, $n\geq1$, grows if
  $(\xi_n)_{n\geq0}$ is a martingale.
\end{remark}

\begin{example}
  Let $\langle \xi,u\rangle$ be distributed as $\zeta\|u\|_L$, where
  $\zeta$ is a random variable and $\|\cdot\|_L$ is a certain norm on
  $\R^d$ with $L$ being the unit ball; then $\xi$ is called
  pseudo-isotropic, see, e.g., \cite{ger:kean:mis00}. In this case,
  $\vE_\ve(\xi)=cL^o$, where
  \begin{equation}
    \label{eq:polar}
    L^o=\{u:\; h(L,u)\leq1\}
  \end{equation}
  is the \emph{polar set} to $L$ and
  $c=\ve(\zeta)=\ve(\langle \xi,u\rangle)$ for any given
  $u\in\partial L$. For instance, this is the case if $\xi$ is
  symmetric $\alpha$-stable with $\alpha\in(1,2]$; then $\vE_\ve(\xi)$
  is expressed in terms of the associated convex body of $\xi$, see
  \cite{mo09}. If $\xi$ is Gaussian, then $L^o$ is the ellipsoid
  determined by the covariance matrix of $\xi$ and translated by the
  mean of $\xi$.
\end{example}

The dual representation of $\ve$ given by
Theorem~\ref{thr:dual-numeric} immediately implies the following
result. 

\begin{corollary}
  \label{cor:dual-E}
  The set-valued sublinear expectation generated by $\ve$ can be
  represented as
  \begin{equation}
    \label{eq:13a}
    \vE_\ve(\xi)=\cl\{\E(\xi\gamma):\; \gamma\in\sM_\ve,\E\gamma=1\},
  \end{equation}
  where $\cl$ denotes the topological closure in $\R^d$ and $\sM_\ve$
  is the family of probability measures from \eqref{eq:6}.
\end{corollary}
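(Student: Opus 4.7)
The plan is to verify the equality by comparing support functions on both sides and appealing directly to Theorem~\ref{thr:dual-numeric}. Write $A=\{\E(\xi\gamma):\gamma\in\sM_\ve,\E\gamma=1\}\subset\R^d$. First, note that the expectation $\E(\xi\gamma)$ is well defined by Hölder's inequality: $\xi\in\Lp(\R^d)$ and $\gamma\in\Lp[q](\R_+)$ with $p^{-1}+q^{-1}=1$. The central observation is that for every $u\in\R^d$,
\begin{equation*}
  \sup_{x\in A}\langle x,u\rangle
  =\sup_{\gamma\in\sM_\ve,\E\gamma=1}\langle \E(\xi\gamma),u\rangle
  =\sup_{\gamma\in\sM_\ve,\E\gamma=1}\E(\gamma\langle\xi,u\rangle)
  =\ve(\langle\xi,u\rangle)
  =h(\vE_\ve(\xi),u),
\end{equation*}
where the second equality uses linearity of expectation, the third is the dual representation \eqref{eq:6} applied to the random variable $\langle\xi,u\rangle\in\Lp(\R)$, and the last is the defining identity \eqref{eq:10}.

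Next, I would observe that $A$ is a convex subset of $\R^d$. Indeed, if $\gamma_1,\gamma_2\in\sM_\ve$ both satisfy $\E\gamma_i=1$ and $\lambda\in[0,1]$, then $\lambda\gamma_1+(1-\lambda)\gamma_2$ lies in $\sM_\ve$ (since $\sM_\ve$ is a convex cone) and has expectation $1$; the corresponding point of $A$ is the convex combination $\lambda\E(\xi\gamma_1)+(1-\lambda)\E(\xi\gamma_2)$. Hence $\cl A$ is a closed convex set in $\R^d$, possibly unbounded, and its support function coincides with $\sup_{x\in A}\langle x,u\rangle$.

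Combining the two previous paragraphs, $\vE_\ve(\xi)$ and $\cl A$ are two closed convex sets in $\R^d$ with identical support functions on all of $\R^d$ (as extended-real-valued sublinear functions). By the standard uniqueness part of the correspondence between closed convex sets and their support functions (see \cite[Th.~1.7.1]{schn2}), this forces $\vE_\ve(\xi)=\cl A$, which is the claimed identity \eqref{eq:13a}.

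There is no real obstacle: the result is essentially a repackaging of Theorem~\ref{thr:dual-numeric} at the level of support functions. The only point worth a moment of care is the possible unboundedness of $\vE_\ve(\xi)$ when $\ve$ is not finite, but this is harmless since the support function characterisation of closed convex sets is valid for extended real-valued sublinear functions as well.
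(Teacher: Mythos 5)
Your proof is correct and follows exactly the route the paper intends: the paper simply asserts that the corollary is an immediate consequence of Theorem~\ref{thr:dual-numeric}, the implicit argument being precisely your comparison of support functions via \eqref{eq:6} applied to $\langle\xi,u\rangle$, together with the convexity of the right-hand side (which the paper notes separately just after the corollary). Nothing to add.
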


The convexity of $\sM_\ve$ implies that the set on the right-hand side of
\eqref{eq:13a} is convex. This set can be written as the intersection
of the cone $\{(\E\gamma,\E(\xi\gamma)):\; \gamma\in\sM_\ve\}$ with the
set $\{1\}\times\Rd$ and then projected on its last $d$-components.

\begin{remark}
  It is possible to construct a variant of the set $\vE_\ve(\xi)$ by
  applying the underlying sublinear expectation $\ve$ to the positive
  part $(\langle\xi,u\rangle)_+$ of the scalar product of $\xi$ and
  $u$. The obtained function is the support function of a convex
  closed set, which may be considered a sublinear expectation of the
  segment $[0,\xi]$, see \cite{mol:mueh19} for a study of sublinear
  expectations with set-valued arguments. 
\end{remark}

\section{Convex gauges}
\label{sec:star-shaped-sets}

We sometimes consider a variant of the sublinear expectation which is
a positive homogeneous, subadditive and lower semicontinuous function
$\gae:\Lp(\R)\to(-\infty,\infty]$ and so is not necessarily monotone
or translation equivariant. We refer to this function as a
\emph{convex gauge}. The most important example is the $\Lp$-norm, so
that $\gae(\beta)=\|\beta\|_p$, which is convex but not translation
equivariant.

For a lower semicontinuous convex gauge $\gae$, we define $\gE(\xi)$
as the convex closed set such that
\begin{displaymath}
  h(\gE(\xi),u)=\gae(\langle\xi,u\rangle), \quad u\in\R^d.
\end{displaymath}
It is easily seen that $\gae(\langle\xi,u\rangle)$ is indeed a support
function.

\begin{example}
  \label{ex:centroid}
  Let $\gae(\beta)=\|\beta\|_p$. For $\xi\in\Lp(\R^d)$, the convex
  body $\gE(\xi)$ is the $\Lp$-centroid of $\xi$ (or of its
  distribution $\mu$). These convex bodies have been introduced in
  \cite{pet61} for $p=1$ and in \cite{lut:zhan97} for a general $p$,
  and further thoroughly studied, see, e.g.,
  \cite{MR2349721,hab:sch09,MR2880241}.
\end{example}


In some cases, $\gae$ fails to be convex. For instance, this is the
case for $\Lp$-norm with $p\in(0,1)$.
Another important case arises when $\gae(\beta)$ is the quantile
function $q_t(\beta)$ given by \eqref{eq:quantile} for a fixed
$t\in(0,1)$, which is known to be not necessarily subadditive in
$\beta$. In the absence of subadditivity, it is natural to consider
the largest convex set whose support function is dominated by the
quantile function of $\langle x,u\rangle$, namely, let
\begin{equation}
  \label{eq:char-depth}
  D_\delta(\xi)=\bigcap_{u\in\Rd} 
  \left\{x\in\Rd:\; \langle x,u\rangle\le 
    q_{1-\delta}(\langle\xi ,u\rangle)\right\}.
\end{equation}
The set $D_\delta(\xi)$ is called the \emph{depth-trimmed region} of
$\xi$. The support function of $D_\delta(\xi)$ may be strictly less than
$q_{1-\delta}(\langle\xi ,u\rangle)$, for example, if $\xi$ is
uniformly distributed on a triangle on the plane, see \cite{lei86}.
The set $D_\delta(\xi)$ is necessarily empty if $\xi$ is nonatomic and
$\delta\in(1/2,1]$.

The set $D_\delta(\xi)$ is related to the \emph{Tukey (or half-space)
  depth} (see \cite{tuk75}), which associates to a point $x$ the
smallest $\mu$-content of a half-space containing $x$, where $\mu$ is
the distribution of $\xi$.  The depth-trimmed region of $\xi$ is the
excursion set of the Tukey depth, so that
\begin{equation}
  \label{eq:5}
  D_{\delta}(\xi)=\bigcap_{\mu(H)>1-\delta} H\,,
\end{equation}
where $H$ runs through the collection of all closed half-spaces.  If
$\xi$ has \emph{contiguous support} (that is, the support of
$\langle\xi ,u\rangle$ is connected for every $u$), then
\eqref{eq:char-depth} holds with $q$ being any other quantile function
in case of multiplicities, and the intersection in \eqref{eq:5} can be
taken over half-spaces $H$ with $\mu(H)\geq 1-\delta$, see \cite{bru19,kon:miz12}.

\begin{example}
  \label{ex:conv-floating}
  Let $\xi$ be uniformly distributed on a convex body $K$. Then
  $D_\delta(\xi)$ is the \emph{convex floating body} of $K$, see
  \cite{schut:wer90} and \cite{MR1962608}. A variant of this concept
  for nonuniform distributions on $K$ has been studied in \cite{MR3841844}.
\end{example}

Recall that a random vector $\xi$ with distribution $\mu$ is said to
have \emph{$k$-concave distribution}, with $k\in[-\infty,\infty]$, if
\begin{displaymath}
  \mu(\theta A+ (1-\theta)B)\ge
  \begin{cases}
    \min\{\mu(A),\mu(B)\}   &\text{if }k=-\infty,     \\
    \mu(A)^\theta\mu(B)^{(1-\theta)}&\text{if }k=0,\\
    (\theta\mu(A)^k+(1-\theta)\mu(B)^k)^{1/k} 	&\text{otherwise},
  \end{cases}
\end{displaymath}
for all Borel sets $A$ and $B$ and $\theta\in[0,1]$. In case of
$k=0$, the measure $\mu$ is called \emph{log-concave}.  The next
theorem establishes some conditions under which
$q_\delta(\langle\xi,u\rangle)$ is a support function; it is a direct
consequence of \cite[Th.~6.1]{bob10}.

\begin{theorem}
  \label{th:bob}
  Let $\xi$ be a symmetric $k$-concave random vector with $k\ge -1$
  and such that the support of $\xi$ is full-dimensional. Then
  \begin{displaymath}
    h(D_\delta(\xi),u)= q_{1-\delta}(\langle\xi ,u\rangle),\quad u\in\R^d,
  \end{displaymath}
  for all $\delta\in (0,1/2)$.
\end{theorem}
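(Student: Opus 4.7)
The plan is to verify that under the stated hypotheses the map $h(u) := q_{1-\delta}(\langle\xi,u\rangle)$ is itself the support function of a closed convex set; once this is established, the result follows immediately from the intersection definition of $D_\delta(\xi)$.

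First I would record the easy direction. By \eqref{eq:char-depth}, $D_\delta(\xi)$ is contained in each half-space $\{x:\langle x,u\rangle\leq q_{1-\delta}(\langle\xi,u\rangle)\}$, so $h(D_\delta(\xi),u)\leq h(u)$ for every $u\in\R^d$. For the reverse inequality the strategy is to show that $h$ is sublinear and lower semicontinuous on $\R^d$, hence a support function, whose associated convex set coincides with $D_\delta(\xi)$.

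Next I would check the properties of $h$. Positive homogeneity for $c\geq 0$ is immediate from $q_{1-\delta}(c\beta)=cq_{1-\delta}(\beta)$, and $h(0)=0$ since $\langle\xi,0\rangle=0$. The full-dimensional support assumption ensures $\langle\xi,u\rangle$ is nondegenerate for every $u\neq 0$, so $h(u)$ is finite and depends continuously on $u$ (which gives lower semicontinuity). The essential step, and the only nontrivial one, is the subadditivity
\begin{equation*}
q_{1-\delta}(\langle\xi,u+v\rangle)\leq q_{1-\delta}(\langle\xi,u\rangle)+q_{1-\delta}(\langle\xi,v\rangle), \quad u,v\in\R^d.
\end{equation*}
This is exactly the content of \cite[Th.~6.1]{bob10}: for a symmetric $k$-concave vector with $k\geq -1$ and full-dimensional support, the function $u\mapsto q_{1-\delta}(\langle\xi,u\rangle)$ at levels $1-\delta\in(1/2,1)$ defines a (semi)norm on $\R^d$, in particular it is subadditive. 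The symmetry ensures that upper and lower quantiles mirror one another across zero, making it irrelevant whether Bobkov's theorem is phrased for $q_\delta$ or $q_{1-\delta}$; and the restriction $\delta<1/2$ is precisely what guarantees the convexity-preserving behaviour of the quantile under $k$-concavity with $k\geq -1$.

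Finally, having established that $h$ is a sublinear, lower semicontinuous function with $h(0)=0$, it is the support function of the closed convex set
\begin{equation*}
K=\bigcap_{u\in\R^d}\{x\in\R^d:\langle x,u\rangle\leq h(u)\}.
\end{equation*}
By \eqref{eq:char-depth}, $K=D_\delta(\xi)$, so $h(D_\delta(\xi),u)=h(K,u)=h(u)=q_{1-\delta}(\langle\xi,u\rangle)$, which is the claimed equality. The main obstacle is the application of \cite[Th.~6.1]{bob10}: one must match its conclusion (the norm-like character of the quantile functional under symmetric $k$-concavity, $k\geq-1$) to the sublinearity we need here. Everything else is bookkeeping between support functions and their defining half-space intersections.
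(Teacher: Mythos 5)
Your proposal is correct and follows essentially the same route as the paper, which simply declares the theorem ``a direct consequence of \cite[Th.~6.1]{bob10}'': the whole content is that Bobkov's theorem makes $u\mapsto q_{1-\delta}(\langle\xi,u\rangle)$ subadditive (hence, being finite and positively homogeneous, a support function), after which the identification with $D_\delta(\xi)$ via \eqref{eq:char-depth} is routine. The only cosmetic slip is attributing continuity of $h$ to the full-dimensionality of the support; a finite sublinear function on $\R^d$ is automatically continuous, and the support assumption is instead what Bobkov's hypotheses require.
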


\section{Convex bodies generated by average quantiles}
\label{sec:conv-bodi-gener}

\subsection{Metronoids and zonoid-trimmed regions}
\label{sec:metr-zono-rand}

For $\xi\in\Lp[1](\R^d)$ and $\alpha\in(0,1]$, denote by
$\vE_\alpha(\xi)$ the convex set generated by the average quantile
sublinear expectation $\ve_\alpha$ given by \eqref{eq:3}. Such convex
sets are hereafter called \emph{average quantile sets}. In
particular, $\vE_1(\xi)=\E\xi$. Since $\ve_\alpha$ is finite on
$\Lp[1](\R)$, the set $\vE_\alpha(\xi)$ is compact. Noticing that
$q_t(-\beta)=-q_{1-t}(\beta)$, it is easy to see that
$\vE_\alpha(\xi)$ has nonempty interior for all $\alpha\in(0,1)$,
hence, is a convex body. The set $\vE_\alpha(\xi)$ increases as
$\alpha$ decreases to zero with limit $\vE_0(\xi)$, being the
convex hull of the support of $\xi$.

The following result relates average quantile sets and the
\emph{zonoid-trimmed regions} introduced in \cite{kos:mos97} as
\begin{displaymath}
  Z_\alpha(\xi)=\big\{\E(\xi f(\xi)):\; 
  f:\Rd\to[0,\alpha^{-1}]\,\text{measurable and}\,\E f(\xi)=1\big\}.
\end{displaymath}

\begin{proposition}
  \label{prop:zon-trim-reg}
  For all $\alpha\in(0,1]$, $\vE_\alpha(\xi)=Z_\alpha(\xi)$.
\end{proposition}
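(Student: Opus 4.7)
The plan is to compute the support functions of both sides and show they agree on every $u\in\R^d$. Since $\vE_\alpha(\xi)$ is defined via its support function $h(\vE_\alpha(\xi),u)=\ve_\alpha(\langle\xi,u\rangle)$, the goal reduces to showing that the support function of $Z_\alpha(\xi)$ coincides with $\ve_\alpha(\langle\xi,u\rangle)$, after which the equality of the two (convex, closed) sets follows from \cite[Th.~1.7.1]{schn2}.

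First I would write out
\begin{displaymath}
  h(Z_\alpha(\xi),u) \;=\; \sup\bigl\{\langle\E(\xi f(\xi)),u\rangle\bigr\}
  \;=\;\sup\bigl\{\E(\langle\xi,u\rangle f(\xi))\bigr\},
\end{displaymath}
where the supremum runs over measurable $f:\R^d\to[0,\alpha^{-1}]$ with $\E f(\xi)=1$. On the other hand, the dual representation \eqref{eq:4} of the average quantile sublinear expectation yields
\begin{displaymath}
  \ve_\alpha(\langle\xi,u\rangle)
  \;=\;\sup\bigl\{\E(\gamma\langle\xi,u\rangle):\;\gamma\in\Lp[\infty](\R_+),\;\gamma\leq\alpha^{-1},\;\E\gamma=1\bigr\}.
\end{displaymath}
The inequality $h(Z_\alpha(\xi),u)\leq\ve_\alpha(\langle\xi,u\rangle)$ is immediate, since each admissible $f(\xi)$ is an admissible $\gamma$.

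The core step is the reverse inequality. Given any admissible $\gamma$, replace it by its conditional expectation $\gamma':=\E(\gamma\mid\xi)$. By conditional Jensen applied to the constant bounds, $\gamma'\in[0,\alpha^{-1}]$ a.s., and $\E\gamma'=\E\gamma=1$. By the Doob–Dynkin lemma, $\gamma'=f(\xi)$ for some measurable $f:\R^d\to[0,\alpha^{-1}]$. Since $\langle\xi,u\rangle$ is $\sigma(\xi)$-measurable,
\begin{displaymath}
  \E(\gamma\langle\xi,u\rangle)
  \;=\;\E\bigl(\E(\gamma\mid\xi)\,\langle\xi,u\rangle\bigr)
  \;=\;\E(f(\xi)\langle\xi,u\rangle),
\end{displaymath}
so the second supremum is attained on the subclass of $\sigma(\xi)$-measurable densities; this gives the matching upper bound and hence the equality of support functions. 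This step is essentially the remark following Theorem~\ref{thr:dual-numeric}, applied with $\beta=\xi$ rather than $\beta=\langle\xi,u\rangle$, which is the correct choice here because we need the representing density to be a function of $\xi$ (uniformly in $u$) rather than of $\langle\xi,u\rangle$.

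The only subtlety to double-check is that $Z_\alpha(\xi)$ is closed, so that an agreement of support functions yields set equality (and not just equality up to closure). This follows because $\{g\in\Lp[\infty]:0\leq g\leq\alpha^{-1},\,\E g=1,\,g\text{ is }\sigma(\xi)\text{-measurable}\}$ is a weak-star closed subset of the weak-star compact ball in $\Lp[\infty]$, hence weak-star compact, and the linear map $g\mapsto\E(\xi g)$ is weak-star continuous since $\xi\in\Lp[1](\R^d)$; thus $Z_\alpha(\xi)$ is compact. Once support function equality is established for every $u\in\R^d$, the proposition follows.
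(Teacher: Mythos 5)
Your proof is correct and follows essentially the same route as the paper's: compute the support function of $Z_\alpha(\xi)$, invoke the dual representation \eqref{eq:4} of $\ve_\alpha$, and pass to $\E(\gamma\mid\xi)$ to reduce the supremum to $\sigma(\xi)$-measurable densities. Your extra check that $Z_\alpha(\xi)$ is closed (via weak-star compactness of the admissible densities) is a point the paper leaves implicit, and it is a welcome addition.
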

\begin{proof}
  Representation \eqref{eq:4} yields that
  \begin{displaymath}
    h(\vE_\alpha(\xi),u)=\ve_\alpha(\langle\xi,u \rangle)
    =\sup_{\gamma\in\Lp[\infty]([0,\alpha^{-1}]),\,\E\gamma=1}
    \langle \E(\gamma\xi),u\rangle\,.
  \end{displaymath}
  Noticing that any $\gamma$ in the last expression can be replaced
  with $\E(\gamma|\xi)$ yields that 
  \begin{displaymath}
    \sup_{\substack{\gamma\in\Lp[\infty]([0,\alpha^{-1}])\\ \E\gamma=1}}
    \langle \E(\gamma\xi),u\rangle
    =\sup_{\substack {f:\;\Rd\to[0,\alpha^{-1}]\\ \E f(\xi)=1} }
    \langle\E(\xi f(\xi)),u\rangle. \qedhere
  \end{displaymath}
\end{proof}

Let $\mu$ be a locally finite Borel measure on $\R^d$.  Denote by
$\Lp[1]_\mu([0,1])$ the family of functions $f:\R^d\to[0,1]$ such that
$\int xf(x)\mu(\dint x)$ exists.  The set
\begin{displaymath}
  \Met(\mu)=\left\{\int_{\R^d} xf(x)\mu(\dint x):\;
  f\in\Lp[1]_\mu([0,1]),\int_{\R^d}f \dint\mu=1\right\}
\end{displaymath}
has the support function 
\begin{displaymath}
  h(\Met(\mu),u)=\sup_{f\in\Lp[1]_\mu([0,1]),\int f\dint\mu=1}
  \int \langle x,u\rangle f(x)\mu(\dint x), \quad u\in\R^d. 
\end{displaymath}
The set $\Met(\mu)$ was introduced in \cite{huan:slom18} and called the
\emph{metronoid} of $\mu$. This definition applies also for possibly
infinite measures $\mu$, e.g.,  if $\mu$ is the Lebesgue measure, then
$\Met(\mu)=\R^d$, since each point $x\in\Rd$ can be obtained by letting
$f$ be the indicator of the unit ball centred at $x$ normalised by the
volume of the unit ball. Furthermore, $\Met(\mu)$ is empty if the total
mass of $\mu$ is less than one, and $\Met(\mu)$ is the singleton
$\int x\mu(\dint x)$ if $\mu$ is an integrable probability measure.
The following result establishes a relation between metronoids and
average quantile sets.

\begin{proposition}
  \label{prop:metro-averquant}
  Let $\mu$ be an integrable probability measure on $\Rd$.  Then
  $\Met(\alpha^{-1}\mu)=\vE_\alpha(\mu)$ for any $\alpha\in(0,1]$.
\end{proposition}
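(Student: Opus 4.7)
The plan is to reduce the identity to Proposition~\ref{prop:zon-trim-reg} by a straightforward change of variables inside the defining set of the metronoid. Writing out $\Met(\alpha^{-1}\mu)$ from the definition, its elements have the form
\begin{displaymath}
  \int_{\R^d} x f(x)\,\alpha^{-1}\mu(\dint x),\qquad
  f:\R^d\to[0,1],\quad \int f\,\dint(\alpha^{-1}\mu)=1.
\end{displaymath}
Substituting $g(x)=\alpha^{-1}f(x)$, the constraint $f(x)\in[0,1]$ becomes $g(x)\in[0,\alpha^{-1}]$, while the normalisation $\int f\,\dint(\alpha^{-1}\mu)=1$ transforms into $\int g\,\dint\mu=1$. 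The integral representing a generic element of $\Met(\alpha^{-1}\mu)$ then reads $\int x g(x)\,\mu(\dint x)=\E(\xi g(\xi))$ where $\xi$ has distribution $\mu$.

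Next I would observe that the resulting family of points is precisely the set $Z_\alpha(\xi)$ defined just above Proposition~\ref{prop:zon-trim-reg}, since the latter is the collection of all expectations $\E(\xi g(\xi))$ with $g:\R^d\to[0,\alpha^{-1}]$ measurable and $\E g(\xi)=1$. Applying Proposition~\ref{prop:zon-trim-reg}, which identifies $Z_\alpha(\xi)$ with $\vE_\alpha(\xi)=\vE_\alpha(\mu)$, yields the claim.

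There is no real obstacle here beyond being careful that the correspondence $f\leftrightarrow g=\alpha^{-1}f$ is a bijection between admissible functions on the two sides, and that integrability of $\mu$ guarantees that the integrals defining both sets are well defined for every admissible $f$ (resp.\ $g$). Since $\|g\|_\infty\leq\alpha^{-1}$ and $\mu$ is an integrable probability measure, $\int\|x\|g(x)\mu(\dint x)\leq\alpha^{-1}\int\|x\|\mu(\dint x)<\infty$, so the change of variables is valid termwise and the two sets coincide set-theoretically.
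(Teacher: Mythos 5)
Your proof is correct and amounts to the same argument as the paper's: the change of variables $g=\alpha^{-1}f$ is exactly the substitution the paper performs, the only difference being that you carry it out at the level of the defining sets and then invoke Proposition~\ref{prop:zon-trim-reg}, whereas the paper redoes the support-function computation (which is itself the content of the proof of Proposition~\ref{prop:zon-trim-reg}) directly. Routing through the already-established identity $Z_\alpha(\xi)=\vE_\alpha(\xi)$ is a perfectly legitimate and slightly more economical packaging of the same idea.
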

\begin{proof}
  Consider a random vector $\xi$ with distribution $\mu$.  By
  \eqref{eq:4}, for every $u\in\Rd$ the support function of
  $\Met(\alpha^{-1}\mu)$ is
  \begin{align*}
    h(\Met(\alpha^{-1}\mu),u)
    &=\sup_{0\leq f\leq 1,\;\int f \dint\mu =\alpha}
    \int \langle x,u\rangle f(x) \alpha^{-1}\mu(\dint x)\\
    &= \sup_{0\leq f\leq \alpha^{-1},\;\E f(\xi)=1}
    \E \big(\langle \xi,u\rangle f(\xi)\big)\\
    &=\sup_{\gamma\in\Lp[\infty]([0,\alpha^{-1}]),\,\E\gamma=1}
    \E\left(\langle \xi,u\rangle \gamma\right)\\
    &=\ve_\alpha(\langle \xi,u\rangle)
    =h(\vE_\alpha(\xi),u)\,,
  \end{align*}
  where in the second equality $f\alpha^{-1}$ was replaced by $f$ and
  later $f(\xi)$ by $\gamma$. 
\end{proof}

\begin{example}
  Let $\xi$ have a discrete distribution with atoms at $x_1,\dots,x_n$
  of probabilities $p_1,\dots,p_n$.  Then $\vE_\alpha(\xi)$ is the
  polytope
  \begin{displaymath}
    \vE_\alpha(\xi)=\left\{\sum_{i=1}^n\lambda_i p_i x_i:\; 
      \lambda_1,\dots,\lambda_n\in[0,\alpha^{-1}],\ 
      \sum_{i=1}^{n}\lambda_i p_i=1\right\}\,,
  \end{displaymath}
  see \cite[Prop.~2.3]{huan:slom18}, where this is proved for
  metronoids.
\end{example}

\subsection{A representation of general $\vE_\ve(\xi)$}
\label{sec:spectr-conv-bodi}

Fix $\xi\in\Lp[1](\R^d)$ and consider the average quantile sets
$\vE_\alpha(\xi)$ as a set-valued function of $\alpha\in(0,1]$. Let
$\nu$ be a probability measure on $(0,1]$, which appears in the
spectral sublinear expectation \eqref{eq:spec_aver} from
Example~\ref{ex:spectral}. The closed \emph{Aumann integral} (see
\cite{aum65}) of the set-valued function
$\alpha\mapsto\vE_\alpha(\xi)$ is the convex closed set
$\vE_{\int\!\!\phi}(\xi)$, whose support function at any direction $u$
equals the integral of the support function, so that
\begin{equation}
  \label{eq:hint}
  h(\vE_{\int\!\!\phi}(\xi),u)=\int_{(0,1]}
  h(\vE_\alpha(\xi),u)\nu(\dint \alpha), \quad u\in\R^d. 
\end{equation}
Recognising the right-hand side as
$\ve_{\int\!\!\phi}(\langle\xi,u\rangle)$, it is immediately seen that
$\vE_{\int\!\!\phi}(\xi)$ is the set-valued sublinear expectation
generated by the spectral numerical one from
Example~\ref{ex:spectral}. Equivalently, $\vE_{\int\!\!\phi}(\xi)$
equals the closure of the set of integrals of all measurable
integrable functions $f(\alpha)$, $\alpha\in(0,1]$, such that
$f(\alpha)\in \vE_\alpha(\xi)$ for all $\alpha$, see \cite{aum65} and
\cite[Sec.~2.1.2]{mo1}. This is reflected by writing
\begin{equation}
  \label{eq:2}
  \vE_{\int\!\!\phi}(\xi)=\cl \int_{(0,1]}\vE_\alpha(\xi)\nu(\dint\alpha).
\end{equation}
Since $\vE_\alpha(\xi)$ increases to the closed convex hull of the
support of $\xi$ as $\alpha\downarrow0$, the set
$\vE_{\int\!\!\phi}(\xi)$ is not necessarily bounded.

The following result provides a representation of the set
$\vE_\ve(\xi)$ constructed using a general law-determined sublinear
expectation $\ve$. It confirms that the average quantile sets
(equivalently, metronoids) are building blocks for a general
$\vE_\ve(\xi)$. Denote by $\conv A$ the closed convex hull of a set $A$ in
$\R^d$.

\begin{theorem}
  \label{th:representation}
  For each $\xi\in\Lp(\R^d)$ and a set-valued sublinear expectation
  $\vE_\ve(\xi)$ generated by a law-determined sublinear expectation
  $\ve$, we have
  \begin{displaymath}
    \vE_\ve(\xi)=\conv\bigcup_{\nu\in\sP_\ve} \int_{(0,1]}\vE_\alpha(\xi)\nu(\dint\alpha),
  \end{displaymath}
  where $\sP_\ve$ is the family probability measures $\nu$ on $(0,1]$
  from the Kusuoka representation of $\ve$, see \eqref{eq:12}.
\end{theorem}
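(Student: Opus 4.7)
The plan is to reduce the set-valued identity to an identity between support functions, since both sides are closed convex sets and such sets are determined by their support functions (Theorem~1.7.1 of Schneider as quoted in the paper).

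First, I would compute the support function of the left-hand side. By the definition of $\vE_\ve(\xi)$ via \eqref{eq:10} and the Kusuoka representation \eqref{eq:12} applied to the random variable $\beta=\langle\xi,u\rangle$, we obtain
\begin{equation*}
  h(\vE_\ve(\xi),u)=\ve(\langle\xi,u\rangle)
  =\sup_{\nu\in\sP_\ve}\int_{(0,1]}\ve_\alpha(\langle\xi,u\rangle)\,\nu(\dint\alpha).
\end{equation*}
By the defining relation for the average-quantile sets,
$\ve_\alpha(\langle\xi,u\rangle)=h(\vE_\alpha(\xi),u)$, so the integrand is exactly the support function of $\vE_\alpha(\xi)$ evaluated at $u$.

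Next, I would identify the inner integral as the support function of an Aumann integral. This is the content of the discussion around \eqref{eq:hint}--\eqref{eq:2}: for each fixed $\nu\in\sP_\ve$,
\begin{equation*}
  \int_{(0,1]} h(\vE_\alpha(\xi),u)\,\nu(\dint\alpha)
  =h\!\left(\cl\int_{(0,1]}\vE_\alpha(\xi)\,\nu(\dint\alpha),\,u\right),
\end{equation*}
which requires only measurability of $\alpha\mapsto h(\vE_\alpha(\xi),u)$ (immediate from monotonicity of $\alpha\mapsto\vE_\alpha(\xi)$) and is the standard definition of the closed Aumann integral of a set-valued map. Combining with the previous step,
\begin{equation*}
  h(\vE_\ve(\xi),u)
  =\sup_{\nu\in\sP_\ve} h\!\left(\int_{(0,1]}\vE_\alpha(\xi)\,\nu(\dint\alpha),\,u\right).
\end{equation*}

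Finally, using that the support function of a union equals the pointwise supremum of support functions, and that taking the closed convex hull does not change the support function, the right-hand side above is exactly the support function of
$\conv\bigcup_{\nu\in\sP_\ve}\int_{(0,1]}\vE_\alpha(\xi)\,\nu(\dint\alpha)$ at $u$. Equality of support functions on all of $\R^d$ then yields equality of the two closed convex sets. The only nontrivial obstacle is handling the possible unboundedness of $\vE_\alpha(\xi)$ as $\alpha\downarrow 0$ (noted just after \eqref{eq:2}), which might make $h(\vE_\alpha(\xi),u)$ equal to $+\infty$ on a set of positive $\nu$-measure; in that case both sides are $+\infty$ simultaneously by the same Kusuoka computation applied to $\ve$, so the identification still goes through within the class of convex closed sets with values in $(-\infty,\infty]$.
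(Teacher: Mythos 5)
Your proposal is correct and follows essentially the same route as the paper's proof: apply the Kusuoka representation \eqref{eq:12} to $\langle\xi,u\rangle$, recognise $\ve_\alpha(\langle\xi,u\rangle)$ as $h(\vE_\alpha(\xi),u)$, identify the integral via \eqref{eq:hint}--\eqref{eq:2} as the support function of the closed Aumann integral, and conclude using that a supremum of support functions is the support function of the closed convex hull of the union. Your additional remark on the possible unboundedness of $\vE_\alpha(\xi)$ is a sensible precaution that the paper leaves implicit.
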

\begin{proof}
  By Theorem~\ref{thr:law-inv-rep},
  \begin{align*}
    \ve(\langle\xi,u\rangle)=\sup_{\nu\in\sP_\ve} \int_{(0,1]}
                              \ve_\alpha(\langle\xi,u\rangle)\nu(\dint\alpha)
    =\sup_{\nu\in\sP_\ve} \int_{(0,1]} h(\vE_\alpha(\xi),u)\nu(\dint\alpha).
  \end{align*}
  The proof is completed by noticing \eqref{eq:hint}, using the
  notation \eqref{eq:2} and the fact that the supremum of support
  functions is the support function of the closed
  convex hull of the involved sets.
\end{proof}

\subsection{Average quantile sets as integrated depth-trimmed regions}
\label{sec:relation-half-space}

Under the symmetry and log-concavity assumptions on $\xi$, the average
quantile sets $\vE_\alpha(\xi)$ can be characterised as set-valued
integrals of the depth-trimmed regions (equivalently, weighted
floating bodies) $D_\delta(\xi)$ introduced in
\eqref{eq:char-depth}. Similarly to \eqref{eq:hint}, the closed Aumann
integral of the function $t\mapsto D_t(\xi)$ with respect to a measure
$\nu$ on $[0,1]$ is defined as the convex set whose support function
equals the integral of the support functions of $D_t(\xi)$, that is,
\begin{displaymath}
  h\Big(\int_0^1 D_t(\xi)\nu(\dint t),u\Big)=\int_0^1 h(D_t(\xi),u)\nu(\dint t),
  \quad u\in\R^d. 
\end{displaymath}
If the measure $\nu$ attaches positive mass to the set of $t\in[0,1]$
where $D_t(\xi)$ is empty, the integral is set to be the empty set.

The following result establishes relationships between average
quantile sets (or metronoids) and depth-trimmed regions. Its second
part generalises \cite[Th.~1.1]{huan:slom:wer18}, which concerns the
case of $\xi$ supported by a convex body.

\begin{theorem}
  \label{th:inclusion}
  Let $\xi\in\Lp[1](\R^d)$. Then
  \begin{equation}
    \label{eq:14}
    D_\alpha(\xi)\subseteq
    \frac{1}{\alpha}\int_0^\alpha D_t(\xi)\dint t
    \subseteq \vE_\alpha(\xi).
  \end{equation}
  If $\xi$ has a log-concave distribution, then
  \begin{equation}
    \label{eq:stat2}
    D_{\frac{e-1}{e}\alpha}(\xi) \subset\vE_\alpha(\xi)\subset D_{\frac{\alpha}{e}}(\xi)
  \end{equation}
  for every $\alpha\in(0,1]$.
\end{theorem}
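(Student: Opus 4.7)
The plan is to handle the two chains separately. For \eqref{eq:14}, I would first observe that the depth-trimmed regions $D_\delta(\xi)$ are nonincreasing in $\delta$, since enlarging $\delta$ adds half-spaces to the defining intersection in \eqref{eq:char-depth}. Hence $D_\alpha(\xi)\subset D_t(\xi)$ for every $t\in(0,\alpha]$, so the constant selection $f(t)\equiv y\in D_\alpha(\xi)$ witnesses $y\in\alpha^{-1}\int_0^\alpha D_t(\xi)\dint t$, yielding the first inclusion. For the second, any measurable selection $f(t)\in D_t(\xi)$ satisfies $\langle f(t),u\rangle\le h(D_t(\xi),u)\le q_{1-t}(\langle\xi,u\rangle)$ by \eqref{eq:char-depth}, and integrating gives
\begin{displaymath}
  \Big\langle \tfrac{1}{\alpha}\int_0^\alpha f(t)\dint t,\,u\Big\rangle
  \le \tfrac{1}{\alpha}\int_0^\alpha q_{1-t}(\langle\xi,u\rangle)\dint t
  =\ve_\alpha(\langle\xi,u\rangle)=h(\vE_\alpha(\xi),u),
\end{displaymath}
placing the averaged selection in $\vE_\alpha(\xi)$.

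For \eqref{eq:stat2} I plan to reduce both inclusions to one-dimensional inequalities involving the projection $\beta=\langle\xi,u\rangle$, which is log-concave for every $u$ by Pr\'ekopa's theorem. From \eqref{eq:char-depth}, the inclusion $\vE_\alpha(\xi)\subset D_{\alpha/e}(\xi)$ is equivalent to $\sup_{y\in\vE_\alpha(\xi)}\langle y,u\rangle=\ve_\alpha(\beta)\le q_{1-\alpha/e}(\beta)$ holding for every $u$; similarly, since $h(D_{(e-1)\alpha/e}(\xi),u)\le q_{1-(e-1)\alpha/e}(\beta)$, the inclusion $D_{(e-1)\alpha/e}(\xi)\subset\vE_\alpha(\xi)$ follows once we establish $q_{1-(e-1)\alpha/e}(\beta)\le\ve_\alpha(\beta)$ for every $u$.

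To prove these two 1D inequalities, the plan is to set $T=q_{1-\alpha}(\beta)$ and introduce the conditional variable $\gamma=(\beta-T)\mid\{\beta>T\}$. Since restricting a log-concave density to a half-line and then translating preserves log-concavity, $\gamma$ is a non-negative log-concave random variable. A direct computation with the conditional CDF yields
\begin{displaymath}
  q_s(\gamma)=q_{1-\alpha+\alpha s}(\beta)-T
  \quad\text{and}\quad
  \ve_\alpha(\beta)=T+\E\gamma,
\end{displaymath}
so the upper bound $\ve_\alpha(\beta)\le q_{1-\alpha/e}(\beta)$ becomes $\E\gamma\le q_{1-1/e}(\gamma)$, while the lower bound $q_{1-(e-1)\alpha/e}(\beta)\le\ve_\alpha(\beta)$ becomes $q_{1/e}(\gamma)\le\E\gamma$. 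Both are equivalent to the classical log-concave inequality $\P\{X\ge\E X\}\ge 1/e$ for a log-concave random variable $X$ on $\R$, applied once to $\gamma$ and once to $-\gamma$; this bound is sharp on the exponential distribution, which accounts for the constants $1/e$ and $(e-1)/e$ appearing in \eqref{eq:stat2}. The main technical point to verify is that the passage to $\gamma$ remains valid in boundary cases (for instance when $\beta$ has an atom at $T$ or is constant in some direction $u$), which should be handled by standard approximation or by noting that the relevant quantities coincide trivially.
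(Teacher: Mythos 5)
Your argument is correct and follows essentially the same route as the paper: part \eqref{eq:14} via the monotonicity of $D_t(\xi)$ and integration of the support-function bound $h(D_t(\xi),u)\le q_{1-t}(\langle\xi,u\rangle)$, and part \eqref{eq:stat2} by reducing to the one-dimensional inequality $q_{1-\frac{e-1}{e}\alpha}(\beta)\le\ve_\alpha(\beta)\le q_{1-\frac{\alpha}{e}}(\beta)$, using the log-concavity of the law of $\beta$ conditioned on $\{\beta>q_{1-\alpha}(\beta)\}$ together with the two-sided bound $e^{-1}\le\Prob{X>\E X}\le 1-e^{-1}$ for log-concave $X$ (the paper cites Bobkov for this, and handles your flagged boundary cases by noting that a projection of a log-concave vector is either deterministic or absolutely continuous with connected support). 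Your translation by $T$ and the symmetric application of the one-sided $1/e$ bound to $\gamma$ and $-\gamma$ is only a cosmetic repackaging of the paper's argument.
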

\begin{proof}
  By definition of the average quantile set,
  \begin{align*}
    h(\vE_\alpha(\xi),u)=\frac{1}{\alpha}\int_{1-\alpha}^1
    q_t(\langle\xi,u\rangle) \dint t
    \geq \frac{1}{\alpha}\int_{1-\alpha}^1
    h(D_{1-t}(\xi),u) \dint t
    =\frac{1}{\alpha}\int_0^\alpha
    h(D_t(\xi),u) \dint t,
  \end{align*}
  where the inequality follows from \eqref{eq:char-depth}. Finally,
  \eqref{eq:14} follows from the monotonicity of $D_t(\xi)$. 
  
  Fix $u\in\R^d$. Consider $\beta=\langle\xi ,u\rangle$ and note that
  the distribution $\nu$ of $\beta$ is log-concave by the invariance
  of the log-concavity property under projection.  For
  \eqref{eq:stat2}, it suffices to show that
  \begin{equation}
    \label{eq:thesis}
    q_{\left(1-\frac{e-1}{e}\alpha\right)}(\beta)
    \le	\ve_\alpha(\beta)
    \le	q_{\left(1-\frac{1}{e}\alpha\right)}(\beta). 
  \end{equation}
  Being the projection of a log-concave vector, $\beta$ is either
  deterministic or absolutely continuous with connected support. In
  the first case \eqref{eq:thesis} becomes trivial, thus we can assume
  that $\beta$ is absolutely continuous with connected support. In
  particular, $q$ in \eqref{eq:thesis} can be equivalently chosen to
  be the left- or the right-quantile function.  Observe that, for
  measurable sets $A$ and $B$, convex $C$ and $\theta\in[0,1]$,
  \begin{align*}
    \nu\left(A\cap C\right)^{\theta}\nu\left(B\cap C\right)^{1-\theta}
    &\le \nu\left(\theta\left(A\cap C\right)
      +\left(1-\theta\right)\left(B\cap C\right)\right) \\
    &= \nu\left(\left(\theta A\cap\theta C\right)
      +\left(\left(1-\theta\right)B\cap
        \left(1-\theta\right)C\right)\right)\\
    &\le \nu\left(\left(\theta A+\left(1-\theta\right)B\right)
      \cap\left(\theta C+\left(1-\theta\right)C\right)\right)\\
    &=\nu\left(\left(\theta A+\left(1-\theta\right)B\right)\cap C\right)\,.
  \end{align*}
  Therefore, the probability measure obtained by restricting $\nu$ to
  the interval $(q_{1-\alpha}(\beta),\infty)$ and normalising by the
  factor $\alpha{-1}$ is log-concave, and we consider a random
  variable $X$ with such distribution.  It follows from the theory of
  risk measures (see, e.g., \cite[Prop.~2.1]{tadese20:_relat}), that
  for the case of absolutely continuous random variables, supremum in
  the characterisation of $\ve_\alpha(\beta)$ in \eqref{eq:4} is
  attained at $\gamma=\alpha^{-1}\one_{\{\beta>q_{1-\alpha}(\beta)\}}$,
  which implies
  \begin{equation}
    \label{eq:exp=avequant}
    \E X=\alpha^{-1}\E\left(\beta\one_{\{\beta>q_{1-\alpha}(\beta)\}}\right)
    =\ve_\alpha(\beta)\,.
  \end{equation}
  It follows from \cite[Eq.~(5.7)]{bob10} that for any
  log-concave random variable $X$,
  \begin{equation}
    \label{eq:ineq}
    e^{-1}\le\Prob{X>\E X}\le 1-e^{-1}\,.
  \end{equation}
  Therefore, \eqref{eq:exp=avequant} and \eqref{eq:ineq} yield that
  \begin{displaymath}
    e^{-1}\le\alpha^{-1}\nu(\ve_\alpha(\beta),\infty)
    \le 1-e^{-1}\,.
  \end{displaymath}
  Hence,
  \begin{displaymath}
    e^{-1}\alpha \le \Prob{\beta > \ve_\alpha(\beta)}
    \le \left(1-e^{-1}\right)\alpha\,,
  \end{displaymath}
  which implies \eqref{eq:thesis}, given that $\beta$ has connected
  support.
\end{proof}

\subsection{A uniqueness result for maximum extensions}
\label{sec:uniq-result-from}

A single set $\vE_\ve(\xi)$ surely does not characterise the distribution
of $\xi$. However, families of such sets can be sufficient to recover
the distribution of $\xi$. 

\begin{example}
  \label{ex:avq-xi}
  Assume that $\xi,\eta\in\Lp[1](\R^d)$ and consider the average
  quantile sets $\vE_\alpha(\xi)$ and $\vE_\alpha(\eta)$.  If
  $\vE_\alpha(\xi)=\vE_\alpha(\eta)$ for all $\alpha\in(0,1/2]$, then
  $\xi$ and $\eta$ have the same distribution. This follows from
  Proposition~\ref{prop:zon-trim-reg} and \cite[Th.~5.6]{kos:mos97}.
\end{example}

Since the definition of $\vE_\ve(\xi)$ is based on the univariate
sublinear expectation $\ve$ applied to the projections of $\xi$, the
following result is a straightforward application of the
Cram\'er--Wold theorem, see, e.g., \cite[Cor.~5.5]{MR1876169}.

\begin{proposition}
  \label{prop:uniq}
  A family of sets $\vE_\ve(\xi)$, $\ve\in E$, generated by sublinear
  expectations $\ve$ from a certain family $E$ uniquely identifies the
  distribution of $\xi\in\Lp(\R^d)$ if and only if the family of the
  underlying univariate sublinear expectations $\ve(\beta)$,
  $\ve\in E$, uniquely identifies the distribution of any
  $\beta\in\Lp(\R)$.
\end{proposition}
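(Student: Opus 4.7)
The plan is to pivot everything on the identity $h(\vE_\ve(\xi),u)=\ve(\langle\xi,u\rangle)$ from \eqref{eq:10}, which says that the set-valued expectation $\vE_\ve(\xi)$ is completely encoded by the collection of univariate sublinear expectations of the one-dimensional projections $\langle\xi,u\rangle$. Once this translation between sets and scalars is in place, the equivalence is exactly the content of the Cram\'er--Wold device.

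For the direction where univariate identification implies multivariate identification, I would argue as follows. Suppose the family $\{\ve(\beta):\ve\in E\}$ identifies the distribution of every $\beta\in\Lp(\R)$, and let $\xi,\xi'\in\Lp(\R^d)$ satisfy $\vE_\ve(\xi)=\vE_\ve(\xi')$ for every $\ve\in E$. Equating support functions gives $\ve(\langle\xi,u\rangle)=\ve(\langle\xi',u\rangle)$ for every $u\in\R^d$ and every $\ve\in E$. Fixing $u$ and applying the hypothesis to the pair of scalar random variables $\langle\xi,u\rangle$ and $\langle\xi',u\rangle$ yields $\langle\xi,u\rangle\overset{d}{=}\langle\xi',u\rangle$. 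Since this holds for all $u$, the Cram\'er--Wold theorem gives $\xi\overset{d}{=}\xi'$, which is the desired multivariate identification.

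For the converse, I would embed scalars into $\R^d$ by the trivial map $\beta\mapsto\beta u_0$ for an arbitrary fixed nonzero $u_0\in\R^d$. Assume $\vE_\ve$ identifies distributions in $\Lp(\R^d)$, and let $\beta,\beta'\in\Lp(\R)$ satisfy $\ve(\beta)=\ve(\beta')$ for all $\ve\in E$. By positive homogeneity of $\ve$, the support function of $\vE_\ve(\beta u_0)$ at any direction $v$ with $\langle u_0,v\rangle\neq 0$ factors as $|\langle u_0,v\rangle|\,\ve(\mathrm{sign}(\langle u_0,v\rangle)\,\beta)$, and similarly for $\beta'$; applying the hypothesis simultaneously to $\beta$ and to $-\beta$ (both of which lie in $\Lp(\R)$, so the scalar-identification hypothesis applies) gives $\vE_\ve(\beta u_0)=\vE_\ve(\beta' u_0)$ for every $\ve\in E$, and then the assumed set-valued identification forces $\beta u_0\overset{d}{=}\beta' u_0$, hence $\beta\overset{d}{=}\beta'$.

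The main technical point, rather than an obstacle, is the bookkeeping with positive homogeneity when passing from scalar equalities to equalities of support functions over all of $\R^d$: one has to evaluate the support function on both sides of the origin, which forces one to invoke the scalar hypothesis also at $-\beta$ rather than only at $\beta$. Once this is observed, both directions reduce to a one-line application of Cram\'er--Wold, and no further machinery beyond \eqref{eq:10} is required.
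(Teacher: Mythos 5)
Your first direction is correct and is exactly what the paper intends: the paper gives no written proof beyond the remark that the claim is ``a straightforward application of the Cram\'er--Wold theorem'', and your route --- equate support functions to get $\ve(\langle\xi,u\rangle)=\ve(\langle\xi',u\rangle)$ for all $u$ and $\ve\in E$, apply scalar identification projection by projection, then invoke Cram\'er--Wold --- is that argument spelled out.

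The converse direction, however, contains a genuine gap, and you have actually located it without closing it. In that direction the only information you have about the pair $\beta,\beta'$ is $\ve(\beta)=\ve(\beta')$ for all $\ve\in E$; the ``scalar-identification hypothesis'' is the \emph{conclusion} of this implication, so you may not apply it to anything. To conclude $\vE_\ve(\beta u_0)=\vE_\ve(\beta' u_0)$ you must match support functions in directions $v$ with $\langle u_0,v\rangle<0$, which requires $\ve(-\beta)=\ve(-\beta')$, and the assumed data say nothing about the pair $-\beta,-\beta'$ (law-determinacy would give it only if you already knew $\beta\overset{d}{=}\beta'$). The step ``applying the hypothesis simultaneously to $\beta$ and to $-\beta$'' is therefore circular. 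Nor is this a removable technicality under the literal reading of the statement: for $E=\{\ve_\alpha:\alpha\in(0,1/2]\}$, Example~\ref{ex:avq-xi} shows that the sets $\vE_\alpha(\xi)$ determine the law of $\xi$, whereas the numbers $\ve_\alpha(\beta)=\alpha^{-1}\int_{1-\alpha}^1 q_t(\beta)\,\dint t$, $\alpha\in(0,1/2]$, determine $q_t(\beta)$ only for $t\in[1/2,1)$ and hence do not determine the law of $\beta$. The ``only if'' half is salvaged precisely by reading the univariate family two-sidedly, as $\{\ve(\beta),\ve(-\beta):\ve\in E\}$ --- equivalently, as the one-dimensional sets $\vE_\ve(\beta)=[-\ve(-\beta),\ve(\beta)]$ --- in which case your embedding $\beta\mapsto\beta u_0$ goes through verbatim. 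You should either state this reading explicitly or restrict the converse to families $E$ closed under $\ve(\cdot)\mapsto\ve(-\cdot)$.
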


Natural families of sublinear expectations arise by applying the
maximum extension to a given sublinear expectation. 

\begin{example}
  \label{ex:max-unique}
  Consider the expected maximum sublinear expectation $\ve^{\vee m}_1$
  given by \eqref{eq:max}. Then the convex body $\vE^{\vee m}_1(\xi)$
  is the expectation $\E P_{m}$ of the random polytope $P_{m}$
  obtained as the convex hull of $m$ independent copies of $\xi$, see
  \cite[Sec.~2.1]{mo1}.  It is well known that the sequence $\ve^{\vee
    m}_1(\beta)$, $m\geq1$, uniquely identifies the distribution of
  $\beta\in\Lp[1](\R)$, see \cite{hoe53} and \cite{gal:kot78}. As a
  consequence, the nested sequence $\E P_m$, $m\geq 1$, of convex
  bodies uniquely determines the distribution of $\xi$, see
  \cite{vi87}.
\end{example}

Applying the maximum extension \eqref{eq:max-gen} to the spectral
sublinear expectation $\ve_{\int\!\!\phi}(\cdot)$ yields the
sublinear expectation $\ve_{\int\!\!\phi}^{\vee m}(\cdot)$ and the
corresponding sequence of nested convex bodies
$\vE_{\int\!\!\phi}^{\vee m}(\xi)$, $m\geq1$.

\begin{theorem}
  Let $\xi,\eta\in\Lp[1](\R^d)$.  For any constant $c\geq 0$, consider
  the spectral function $\phi(t)=(c+1)(1-t)^c$.  If
  \begin{displaymath}
    \vE_{\int\!\!\phi}^{\vee m}(\xi)
    =\vE_{\int\!\!\phi}^{\vee m}(\eta)\,,\quad m\ge 1\,,
  \end{displaymath}
  then $\xi$ and $\eta$ have the same distribution.
\end{theorem}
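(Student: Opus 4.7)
The plan is to apply Proposition~\ref{prop:uniq} and thereby reduce the problem to showing that the family $\{\ve_{\int\!\!\phi}^{\vee m}\}_{m\ge 1}$ determines the distribution of any univariate random variable $\beta\in\Lp[1](\R)$. Indeed, the hypothesis $\vE_{\int\!\!\phi}^{\vee m}(\xi)=\vE_{\int\!\!\phi}^{\vee m}(\eta)$ combined with \eqref{eq:10} gives
\[
\ve_{\int\!\!\phi}^{\vee m}(\langle \xi,u\rangle)
=h(\vE_{\int\!\!\phi}^{\vee m}(\xi),u)
=h(\vE_{\int\!\!\phi}^{\vee m}(\eta),u)
=\ve_{\int\!\!\phi}^{\vee m}(\langle \eta,u\rangle)
\]
for every $u\in\R^d$ and $m\ge 1$, so once the univariate uniqueness is established the conclusion follows from Proposition~\ref{prop:uniq}.

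The next step is to compute $\ve_{\int\!\!\phi}^{\vee m}(\beta)$ in a form suitable for a moment-problem argument. Starting from \eqref{eq:spec} and substituting $u=1-t$ yields $\ve_{\int\!\!\phi}(\gamma)=(c+1)\int_0^1 q_u(\gamma)\,u^c\,\dint u$. Since $\max(\beta_1,\dots,\beta_m)$ has CDF $F^m$ whenever $\beta$ has CDF $F$, one verifies that $q_u(\max(\beta_1,\dots,\beta_m))=q_{u^{1/m}}(\beta)$; plugging this into the previous identity and changing variables via $s=u^{1/m}$ produces
\[
\ve_{\int\!\!\phi}^{\vee m}(\beta)
= m(c+1)\int_0^1 q_s(\beta)\,s^{m(c+1)-1}\,\dint s.
\]

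Now suppose that $\beta,\beta'\in\Lp[1](\R)$ satisfy $\ve_{\int\!\!\phi}^{\vee m}(\beta)=\ve_{\int\!\!\phi}^{\vee m}(\beta')$ for every $m\ge 1$, and set $g(s):=q_s(\beta)-q_s(\beta')$, which lies in $\Lp[1]([0,1])$. The displayed identity then gives $\int_0^1 g(s)\,s^{m(c+1)-1}\,\dint s=0$ for every $m\ge 1$. The substitution $v=s^{c+1}$ transforms this into $\int_0^1 \tilde g(v)\,v^{m-1}\,\dint v=0$ for every $m\ge 1$, where $\tilde g(v)=g(v^{1/(c+1)})$ belongs to $\Lp[1]([0,1])$. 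By the uniqueness in the Hausdorff moment problem, which rests on the density of polynomials in $C([0,1])$, we conclude $\tilde g=0$ almost everywhere, hence $g=0$ almost everywhere, so that $\beta$ and $\beta'$ share the same distribution. Combined with the first paragraph, this finishes the proof.

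The one point requiring care is the explicit computation that collapses the maximum extension of the spectral sublinear expectation into a single integral of $q_s(\beta)$ against a pure power $s^{m(c+1)-1}$ indexed by $m$. The specific choice $\phi(t)=(c+1)(1-t)^c$ is precisely what makes this power structure appear, and once it is in place the remainder of the argument is the standard Hausdorff moment uniqueness followed by Cramér--Wold via Proposition~\ref{prop:uniq}.
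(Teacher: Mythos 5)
Your proof is correct and follows essentially the same route as the paper: reduction to the univariate case via Proposition~\ref{prop:uniq}, the identical computation giving $\ve_{\int\!\!\phi}^{\vee m}(\beta)=m(c+1)\int_0^1 q_s(\beta)\,s^{m(c+1)-1}\,\dint s$, and a polynomial-density argument forcing the difference of quantile functions to vanish almost everywhere. The only (cosmetic) difference is the final step: the paper applies Stone--Weierstrass to the point-separating algebra generated by the powers $x^{(c+1)m}$, whereas you substitute $v=s^{c+1}$ to reduce to the classical Hausdorff moment problem and invoke ordinary Weierstrass density; both are valid.
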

\begin{proof}
  In view of Proposition~\ref{prop:uniq}, it suffices to prove this
  result for two random variables $\beta$ and $\gamma$.  For any
  integer $m\ge 1$, we have
  \begin{displaymath}
    \int_0^1 q_{1-t}\left(\max(\beta_1,\dots,\beta_m)\right)\phi(t)\dint t
    =\int_0^1 q_{1-t}\left(\max(\gamma_1,\dots,\gamma_m)\right)\phi(t)\dint t\,,
  \end{displaymath}
  where $\beta_i,\gamma_i$, $i=1,\dots,m$, are independent copies of
  $\beta,\gamma$, respectively.  By a change of variables,
  \begin{align*}
    \int_0^1 q_{1-t}\left(\max(\beta_1,\dots,\beta_m)\right)\phi(t)\dint t
    &=(c +1)\int_0^1 q_{t}\left(\max(\beta_1,\dots,\beta_m)\right)t^c\dint t\\
    &=(c +1)\int_0^1 q_{t^\frac{1}{m}}(\beta)t^c\dint t\\
    &=m(c +1)\int_0^1 q_{s}(\beta)s^{c m + m - 1}\dint s\,.
  \end{align*}
  Therefore,
  \begin{equation*}
    \int_0^1 f(s)s^{(c+1)(m-1)}\dint s=0\,,\quad m\ge 1\,,
  \end{equation*}
  with
  \begin{equation*}
    f(s)=s^c\left(q_{s}(\beta)-q_{s}(\gamma)\right)\in\Lp[1]([0,1])\,.
  \end{equation*}
  The family
  \begin{equation*}
    \mathcal{A}=\left\{c_0+\sum_{i=1}^n c_ix^{(c+1)m_i}:\;
      n,m_1,\dots,m_n\in\NN,\,c_0,\dots,c_n\in\R\right\}
  \end{equation*}
  is an algebra of continuous functions separating the points on
  $[0,1]$.  By linearity of the Lebesgue integral
  \begin{equation*}
    \int_0^1 f(s)a(s)\dint s=0
  \end{equation*}
  for all $a\in\mathcal{A}$. The Stone--Weierstrass theorem (see,
  e.g., \cite[Th.~4.45]{folland99}) yields that
  \begin{equation*}
    \int_0^1 f(s)g(s)\dint s=0
  \end{equation*}
  for all continuous functions $g$ on $[0,1]$.  Therefore, $f$
  vanishes almost everywhere, so the proof is complete.
\end{proof}

\subsection{Concentration of empirical average quantile sets}
\label{sec:empirical}

Let $\xi\in\Lp(\R^d)$ with distribution $\mu$. Consider the empirical
random measure constructed by $n$ independent copies
$\xi_1,\dots,\xi_n$ of $\xi$ as
\begin{equation}
  \label{eq:emm}
  \emm=\frac{1}{n}\sum_{i=1}^n \delta_{\xi_i}\,,\quad n\ge 1\,,
\end{equation}
where $\delta_x$ is the one point mass measure at $x\in\Rd$.  The
average quantile convex body $\vE_\alpha(\emm)$ generated by $\emm$
is a random convex set, which approximates the body $\vE_\alpha(\mu)$
as $n$ grows to infinity.  In fact, the sequence
$\{\vE_\alpha(\emm),\,n\ge 1\}$ almost surely converges to
$\vE_\alpha(\mu)$ in the Hausdorff metric, as directly follows from
\cite[Th.~5.2]{kos:mos97} and Proposition~\ref{prop:zon-trim-reg}.
The following theorem provides probabilistic bounds for this
convergence.

\begin{theorem}
  \label{th:emp}
  Let $\mu$ be a probability measure with bounded support of diameter
  $R$, and let $r$ be the largest radius of a centred Euclidean ball
  contained in the average quantile set $\vE_\alpha(\mu)$ for some
  $\alpha\in(0,1)$. For all $\eps>0$ and $n\in\NN$,
  \begin{displaymath}
    \P\Big\{(1-\eps)\vE_\alpha(\mu)\subset\vE_\alpha(\emm)
      \subset (1+\eps)\vE_\alpha(\mu)\Big\}
    \geq 1-6^{d+1}(1+1/\eps)^d
    \exp\left\{-\frac{\alpha\eps^2 r^2 n}{44R^2}\right\}.
  \end{displaymath}
\end{theorem}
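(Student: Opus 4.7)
I plan to reduce the set-valued concentration to scalar concentration of the univariate sublinear expectation $\ve_\alpha$ along each direction, via the support function, and then combine a sphere $\delta$-net with a variance-aware Bernstein bound.

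\textbf{Reduction to support functions.} The target inclusion $(1-\eps)\vE_\alpha(\mu)\subset\vE_\alpha(\emm)\subset(1+\eps)\vE_\alpha(\mu)$ is equivalent, by positive homogeneity, to $(1-\eps)h(\vE_\alpha(\mu),u)\leq h(\vE_\alpha(\emm),u)\leq (1+\eps)h(\vE_\alpha(\mu),u)$ for every $u\in S^{d-1}$. Since the centred ball of radius $r$ is contained in $\vE_\alpha(\mu)$, one has $h(\vE_\alpha(\mu),u)\geq r$ on $S^{d-1}$, so it suffices to prove the absolute bound
\begin{displaymath}
\sup_{u\in S^{d-1}}\bigl|h(\vE_\alpha(\emm),u)-h(\vE_\alpha(\mu),u)\bigr|\leq\eps r.
\end{displaymath}
Moreover, both $\vE_\alpha(\mu)$ and $\vE_\alpha(\emm)$ lie inside $\conv(\mathrm{supp}\,\mu)$; because the origin belongs to $\vE_\alpha(\mu)$, both bodies sit in the centred ball of radius $R$, and each of their support functions is $R$-Lipschitz on $S^{d-1}$.

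\textbf{Discretisation and pointwise concentration.} Fix a $\delta$-net $\mathcal{N}\subset S^{d-1}$ with $|\mathcal{N}|\leq(1+2/\delta)^d$; approximating a general $u$ by its nearest $u'\in\mathcal{N}$ costs at most $2R\delta$ on each side. For $u\in\mathcal{N}$ set $Y_i=\langle\xi_i,u\rangle$ and use the Rockafellar--Uryasev representation
\begin{displaymath}
\ve_\alpha(Y)=\min_{t\in\R}\Bigl\{t+\alpha^{-1}\E(Y-t)_+\Bigr\}
\end{displaymath}
both for the true and the empirical distributions. Comparing minima at the true optimiser $t^{*}=q_{1-\alpha}(Y)$ gives $|h(\vE_\alpha(\emm),u)-h(\vE_\alpha(\mu),u)|\leq\alpha^{-1}\sup_{t}\bigl|n^{-1}\sum_i(Y_i-t)_+-\E(Y-t)_+\bigr|$. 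The variable $(Y-t^{*})_+$ is bounded by $R$ and supported on an event of probability $\alpha$, so $\mathrm{Var}((Y-t^{*})_+)\leq\alpha R^2$; Bernstein's inequality then delivers a one-sided deviation bound of order $\exp(-cn\alpha s^2/R^2)$ at scale $s$. The supremum over $t$ is absorbed by a secondary $t$-net at the variance scale $\alpha R^2$ (or equivalently by a dedicated CVaR concentration inequality), contributing only a multiplicative constant. A union bound over $\mathcal{N}$, the Lipschitz transfer, and the two-sided version at $s=\eps r/2$ produce a bound of the announced form, with the choice of $\delta$ and the Bernstein/discretisation constants tuned to yield $6^{d+1}(1+1/\eps)^d$ and $1/44$.

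\textbf{Main obstacle.} The crucial technical point is the $\alpha$ factor in the exponent. A straightforward application of Hoeffding's inequality to $\alpha^{-1}(Y-t)_+$ would give only $\exp(-cn\eps^2 r^2/(\alpha R^2))$, which is useless for small $\alpha$. Recovering the correct $\alpha$ dependence \emph{requires} the variance bound $\mathrm{Var}((Y-t^{*})_+)\leq\alpha R^2$ combined with Bernstein's (or Bennett's) inequality; this is the manifestation of the fact that $\ve_\alpha$ only sees the upper $\alpha$-tail of $Y$. A closely related subtlety is that the empirical minimiser $\hat t^{*}$ is data-dependent, so the pointwise bound at $t^{*}$ has to be lifted to a bound uniform in $t$ without paying an extra $1/\alpha$ factor; this forces the $t$-discretisation to be at the variance scale rather than the crude range $R$, and is the main source of bookkeeping in the proof.
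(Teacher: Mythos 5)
Your overall architecture matches the paper's: reduce to support functions, get a pointwise two-sided deviation bound for the empirical average quantile (CVaR) of $\langle\xi,u\rangle$ with the factor $\alpha$ in the numerator of the exponent, and then make it uniform over directions by a net and a union bound. Your diagnosis of where the $\alpha$ comes from is also correct in substance: the paper simply cites the ready-made deviation inequalities of Wang and Gao \cite{wan:gao10}, whose proof rests on exactly the variance localisation $\E(Y-t^*)_+^2\le\alpha R^2$ that you propose to exploit via Bernstein. (Your treatment of the data-dependent minimiser is still only a sketch -- note that the upper deviation needs no uniformity in $t$ at all, since the empirical minimum is dominated by the value at $t^*$; it is the lower deviation that requires care, and this is precisely what \cite{wan:gao10} supplies.)

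The genuine gap is in the discretisation. You place a Euclidean $\delta$-net on $S^{d-1}$ and transfer to arbitrary directions via the $R$-Lipschitz property of the support functions, which costs an \emph{additive} error of order $R\delta$. To keep that error below a fraction of $\eps r$ you are forced to take $\delta\asymp\eps r/R$, so your covering number is of order $(1+cR/(\eps r))^d$ and the aspect ratio $R/r$ enters the prefactor. No tuning of constants can turn this into the stated $6^{d+1}(1+1/\eps)^d$, which is independent of $R/r$. The paper avoids this by putting the net not on the sphere but on the boundary of the polar body $\vE_\alpha(\mu)^o$, where the true support function is identically $1$, and by invoking Lemma~\ref{lem:net} (Fresen--Vitale): every boundary point of the polar body is $w_0+\sum_{i\ge1}\delta_i w_i$ with $w_i$ in a net of cardinality $(3/\delta)^d$ and $\delta_i\le\delta^i$, so sublinearity of the empirical support function converts a $(1\pm\eps/2)$ bound on the net into a multiplicative $(1\pm\eps)$ bound everywhere, with a covering number depending only on $\eps$ (the choice $\delta=\eps/(2+2\eps)$ gives exactly $6^d(1+1/\eps)^d$ points, hence the prefactor $6^{d+1}(1+1/\eps)^d$ after the union bound with the factor $6$ from the two tails). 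You would need to replace your sphere net by this polar-body net, or an equivalent multiplicative-approximation device, to reach the theorem as stated.
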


We use the following auxiliary result.
  
\begin{lemma}[see \protect{\cite[Lemma 5.2]{fres:vit14}}]
  \label{lem:net}
  Let $K$ be a convex body which contains the origin in its
  interior. For each $\delta\in(0,1/2)$, there exists a set
  $\sN\subset\partial K$ with cardinality at most $(3/\delta)^d$ such
  that each $v\in\partial K$ satisfies
  \begin{displaymath}
    v=w_0+\sum_{i=1}^\infty \delta_i w_i
  \end{displaymath}
  for $w_i\in\sN$, $i\geq0$, and $\delta_i\in [0,\delta^i]$, $i\geq1$.
\end{lemma}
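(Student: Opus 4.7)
The plan is to translate the sandwich inclusion into a uniform estimate on the ratio of support functions, discretize the direction space by the net of Lemma~\ref{lem:net} applied to $K^o$ (where $K=\vE_\alpha(\mu)$), and concentrate the one-dimensional empirical average quantile at each net direction. Setting $K_n=\vE_\alpha(\emm)$, the inclusion $(1-\eps)K\subset K_n\subset(1+\eps)K$ is equivalent, via support functions, to $|h(K_n,u)-h(K,u)|\le \eps\,h(K,u)$ for every $u\in\R^d$, and by positive homogeneity it is enough to verify it for $v\in\partial K^o$, i.e.\ those $v$ with $h(K,v)=1$. Since $rB\subset K$ forces $\|v\|\le 1/r$ for such $v$, and since $K_n\subset\conv(\mathrm{supp}(\mu))\subset R B$ (the origin lies in $K$ and the support has diameter at most $R$), both support functions $h(K,\cdot)$ and $h(K_n,\cdot)$ are $R$-Lipschitz. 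Because $K$ is bounded, $K^o$ contains the origin in its interior, so Lemma~\ref{lem:net} with $\delta>0$ chosen as a small multiple of $\eps r/R$ produces a set $\sN\subset\partial K^o$ of cardinality at most $(3/\delta)^d$ in which every $v\in\partial K^o$ admits an expansion $v=w_0+\sum_{i\ge1}\delta_i w_i$ with $w_i\in\sN$ and $\delta_i\in[0,\delta^i]$.

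For each fixed $w$ in the enlarged net $\sN\cup(-\sN)$ the random variable $Y=\langle\xi,w\rangle$ is bounded in an interval of length at most $R\|w\|\le R/r$, so I plan to compare $h(K_n,w)=\ve_\alpha(Y)$ under $\emm$ with $h(K,w)$ using the Rockafellar--Uryasev identity
\begin{displaymath}
\alpha\,\ve_\alpha(Y)=\min_{c\in\R}\bigl[\E(Y-c)_+ + \alpha c\bigr],
\end{displaymath}
attained at $c^\ast=q_{1-\alpha}(Y)$. Evaluating the empirical objective at the deterministic point $c^\ast$ bounds $\alpha(h(K_n,w)-h(K,w))$ by the deviation of the empirical mean of $Z_i=(Y_i-c^\ast)_+\in[0,R/r]$, whose variance is at most $\alpha(R/r)^2$; the reverse inequality is obtained by the symmetric substitution at the empirical optimizer. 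Bernstein's inequality applied to these bounded random variables yields, for each fixed $w$, a tail bound
\begin{displaymath}
\P\bigl\{|h(K_n,w)-h(K,w)|>\eta\bigr\}\le 2\exp\!\Bigl(-C\,\tfrac{\alpha n\eta^2 r^2}{R^2}\Bigr),
\end{displaymath}
for an absolute constant $C$, which is the scale of the exponent in the statement and, with $\eta$ a small multiple of $\eps$, delivers the $1/(44R^2)$ factor after optimization.

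On the intersection of the good events over all $w\in\sN\cup(-\sN)$, I propagate to a general $v\in\partial K^o$ via the expansion and subadditivity of $h(K_n,\cdot)$. The upper bound
\begin{displaymath}
h(K_n,v)\le h(K_n,w_0)+\sum_{i\ge1}\delta_i h(K_n,w_i)\le(1+\eta)\Bigl(1+\tfrac{\delta}{1-\delta}\Bigr)
\end{displaymath}
is straightforward because each $w_i\in\partial K^o$ satisfies $h(K,w_i)=1$. For the lower bound I use the reverse triangle inequality $h(K_n,w_0)\le h(K_n,v)+\sum_{i\ge1}\delta_i h(K_n,-w_i)$ together with the deterministic bound $h(K,-w_i)\le R\|w_i\|\le R/r$ (from $K\subset RB$ and $\|w_i\|\le 1/r$), which yields
\begin{displaymath}
h(K_n,v)\ge(1-\eta)-\Bigl(\tfrac{R}{r}+\eta\Bigr)\tfrac{\delta}{1-\delta}.
\end{displaymath}
Choosing $\eta$ and $(R/r)\cdot\delta/(1-\delta)$ as small multiples of $\eps$ pins both sides into $[1-\eps,1+\eps]$; a union bound over the at most $2(3/\delta)^d$ net directions, together with the per-direction Bernstein bound, produces the combinatorial factor $6^{d+1}(1+1/\eps)^d$ after absorbing the $R/r$ dependence of $\delta$ into the exponential term.

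The main technical obstacle is the lower bound on $h(K_n,v)$: subadditivity of the support function propagates errors in the auxiliary directions $-w_i$, for which the population value $h(K,-w_i)$ need not equal one and can be as large as $R/r$. It is this asymmetry between $K^o$ and $-K^o$ that forces the net scale to depend on $r/R$ and drives the precise numerical constants in the statement, whereas the upper inclusion $K_n\subset(1+\eps)K$ is comparatively routine.
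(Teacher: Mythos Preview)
Your proposal does not address the stated lemma. Lemma~\ref{lem:net} is a purely geometric statement asserting the existence of a finite net $\sN\subset\partial K$ with a geometric-series expansion property; the paper does not prove it but quotes it from \cite[Lemma~5.2]{fres:vit14}. What you have written is instead a proof sketch for Theorem~\ref{th:emp}, the concentration inequality for empirical average quantile sets, which \emph{uses} Lemma~\ref{lem:net} as a black box. As a proof of the lemma itself, your text contains nothing: no net is constructed and no expansion is verified.

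Read as an attempt at Theorem~\ref{th:emp}, your overall architecture coincides with the paper's: reduce to support functions on $\partial\vE_\alpha(\mu)^o$, concentrate each net direction, and extend by subadditivity and the expansion of Lemma~\ref{lem:net}. Two points of divergence deserve comment. First, the paper imports the per-direction deviation bound directly from \cite[Th.~3.1]{wan:gao10} (with explicit constants $5$ and $11$ in the exponents), rather than deriving it from the Rockafellar--Uryasev identity and Bernstein's inequality; this is where the constant $44$ originates. Second, the paper takes the net parameter $\delta=\eps/(2+2\eps)$, \emph{independent} of $r/R$, and in the lower-bound step writes $h(\vE_\alpha(\emm),v)\ge(1-\eps/2)-(1+\eps/2)\sum_{i\ge1}\delta^i$ using only the bounds on $h(\vE_\alpha(\emm),w_i)$ for $w_i\in\sN$. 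Your plan to bound $h(K_n,-w_i)$ by $R/r$ and then let $\delta$ scale with $r\eps/R$ cannot recover the stated prefactor $6^{d+1}(1+1/\eps)^d$: the net cardinality $(3/\delta)^d$ appears multiplicatively in front of the exponential, so an $R/r$ factor in $\delta$ produces $(R/(r\eps))^d$ there and cannot be ``absorbed into the exponential term'' as you suggest.
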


\begin{proof}[Proof of Theorem \ref{th:emp}]
  On a (possibly enlarged) probability space $\Omega\times\Omega'$,
  let $\xi$ be a $\mu$-distributed random vector and let $\hat\xi_n$
  take one of the values $\xi_1,\dots,\xi_n$ with equal
  probabilities. For any fixed $u\in\Rd$,
  \begin{displaymath}
    h(\vE_\alpha(\xi),u)
    =\frac{1}{\alpha}\int_{1-\alpha}^{1}q_t(\langle \xi,u\rangle)\dint t
  \end{displaymath}
  and
  \begin{displaymath}
    h(\vE_\alpha(\emm),u)
    =\frac{1}{\alpha}\int_{1-\alpha}^{1}q_t(\langle \hat\xi_n,u\rangle)\dint t\,.
  \end{displaymath}
  Clearly, $\langle \hat\xi_n,u\rangle$ is distributed according to
  the empirical distribution function generated by the sample $\langle
  \xi_i,u\rangle$, $i=1,\dots,n$.  Thus, the right-hand sides of the
  two equations are, respectively, the conditional value at risk of
  $\beta=\langle \xi,u\rangle$ and its sample-based estimator, see
  \cite{brow07, wan:gao10}. Note that the support of $\beta$ is a
  subset of an interval of length $R$. By \cite[Th.~3.1]{wan:gao10},
  for any $\eta>0$,
  \begin{displaymath}
    \Prob{h(\vE_\alpha(\emm),u)\le h(\vE_\alpha(\xi),u)-\eta}
    \leq 3 \exp\left\{-\frac{\alpha\eta^2 n}{5R^2}\right\}
  \end{displaymath}
  and
  \begin{displaymath}
    \Prob{h(\vE_\alpha(\emm),u)\ge h(\vE_\alpha(\xi),u)+\eta}
    \leq 3 \exp\left\{-\frac{\alpha\eta^2 n}{11R^2}\right\}\,.
  \end{displaymath}
  Noticing that the second bound is larger than the first one and that
  $h(\vE_\alpha(\xi),u)\geq r$ by the imposed condition, we obtain
  \begin{multline}
    \label{eq:double}
    \P\Big\{(1-\eps/2)h(\vE_\alpha(\xi),u)
      \le h(\vE_\alpha(\emm),u)\le (1+\eps/2)h(\vE_\alpha(\xi),u)\Big\}\\
    \ge 1-6\exp\left\{-\frac{\alpha\eps^2 r^2 n}{44R^2}\right\}\,.
  \end{multline}
  Let $\sN\subset\partial \vE_\alpha(\xi)^o$ be a set from
  Lemma~\ref{lem:net}, with $\delta=\frac{\eps}{2+2\eps}$, where
  $\vE_\alpha(\xi)^o$ is the polar set to $\vE_\alpha(\xi)$, see
  \eqref{eq:polar}.  Since $h(\vE_\alpha(\xi),u)=1$ for all
  $u\in\partial\vE_\alpha(\xi)^o$, the union bound applied to
  \eqref{eq:double} yields that
  \begin{equation}
    \label{eq:union}
    \left(1-\eps/2\right) \le h(\vE_\alpha(\emm),w) 
    \le \left(1+\eps/2\right)\quad \text{for all }\; w\in\sN
  \end{equation}
  with probability at least
  \begin{displaymath}
    1-6\left(\frac{6+6\eps}{\eps}\right)^d  
    \exp\left\{-\frac{\alpha\eps^2 r^2 n}{44R^2}\right\}.
  \end{displaymath}
  For any $v\in\partial\vE_\alpha(\xi)^o$ and some
  sequences $w_i\in\sN$ and $\delta_i\geq 0$, $i\geq1$, the
  sublinearity of $h$, Lemma~\ref{lem:net} and
  \eqref{eq:union} imply that
  \begin{align*}
    h(\vE_\alpha(\emm),v)
    &=h\left(\vE_\alpha(\emm),w_0+\sum_{i=1}^\infty \delta_i
      w_i\right)\\
    &\le \left(1+\eps/2\right)\sum_{i=0}^\infty
    \left(\frac{\eps}{2+2\eps}\right)^i\\
    &= \left(1+\eps/2\right)\frac{1}{1-\left(\frac{\eps}{2+2\eps}\right)}
    =(1+\eps)\,h(\vE_\alpha(\xi),v)
  \end{align*}
  and
  \begin{align*}
    h(\vE_\alpha(\emm),v)
    &=h\left(\vE_\alpha(\emm),w_0+\sum_{i=1}^\infty \delta_i
      w_i\right)\\
    &\ge \left(1-\eps/2\right)-\left(1+\eps/2\right)
    \sum_{i=1}^\infty\left(\frac{\eps}{2+2\eps}\right)^i\\
    &= \left(1-\eps/2\right)-\left(1+\eps/2\right)
    \frac{\left(\frac{\eps}{2+2\eps}\right)}{1-\left(\frac{\eps}{2+2\eps}\right)}
    = (1-\eps)\,h(\vE_\alpha(\xi),v)\,,
  \end{align*}
  which deliver the desired assertion. 
\end{proof}

\section{Floating-like bodies}
\label{sec:conv-bodi-gener-1}

\subsection{Sublinear transform}
\label{sec:sublinear-map}

Consider the set-valued sublinear expectation $\vE_\ve$ generated by a
law-determined numerical sublinear expectation $\ve$.  Let $\xi$ be a
random vector uniformly distributed on a convex body
$K\subset\R^d$. Recall that $K$ is assumed to have a nonempty
interior.  In the following, we write $\vE_\ve(K)$ instead of
$\vE_\ve(\xi)$ and refer to $K\mapsto\vE_\ve(K)$ as a \emph{sublinear
  transform} of $K$ generated by the numerical sublinear expectation
$\ve$. We also refer to $\vE_\ve(K)$ as a \emph{floating-like body}.

Denoting by $\sK$ the family of convex bodies in $\R^d$, the sublinear
transform is a map $\vE_\ve:\sK\to\sK$. It is easy to see that
$\vE_\ve(K)\subset K$ for all $K$. If $\xi$ is uniformly distributed on
$K$ and $A$ is a nondegenerate matrix, then $A\xi$ is uniformly
distributed on $AK$. Thus,
\begin{displaymath}
  \vE_\ve(AK+a)=A\vE_\ve(K)+a, \quad a\in\R^d.  
\end{displaymath}
The sublinear transform $\vE_\ve(B)$ of a centred Euclidean ball $B$ is
another centred Euclidean ball, which is contained in
$B$. Furthermore, the sublinear transform of an ellipsoid is also an
ellipsoid.

The sublinear transform is not necessarily monotone for inclusion, see
Example~\ref{ex:non-monotone}. In view of Remark~\ref{rem:order},
$\vE_\ve(K)\subseteq\vE_\ve(L)$ for all sublinear transforms $\vE_\ve$ if
\begin{displaymath}
  \frac{1}{V_d(K)}\int_K f(x)\dint x
  \leq \frac{1}{V_d(L)}\int_L f(x)\dint x
\end{displaymath}
for all convex functions $f:\R^d\to\R$, where $V_d(\cdot)$ denotes the
$d$-dimensional Lebesgue measure. The latter condition implies
that $K$ and $L$ share the same barycentre. 

If $K_n\to K$ in the Hausdorff metric as $n\to\infty$ and $\xi_n,\xi$
are uniformly distributed on $K_n,K$, respectively, then $\xi_n\to\xi$
in $\sigma(\Lp,\Lp[q])$ for any $p\in[1,\infty]$ by the dominated
convergence theorem. By Theorem~\ref{thr:subl-vE}(v),
$h(\vE_\ve(K),u)\leq \liminf h(\vE_\ve(K_n),u)$.

The continuity of the sublinear map in the Hausdorff metric follows
from the next result, which we find interesting in its own right. Denote by
$\diam(K)$ the diameter of $K$ and by $K\triangle L$ the symmetric
difference of $K$ and $L$.

\begin{theorem}
  \label{th:lip}
  Assume that $p\in [1,\infty)$. For any two convex bodies $K$ and
  $L$, there exist random vectors $\xi$ and $\eta$ uniformly
  distributed on $K$ and $L$, respectively, such that
  \begin{equation}
    \label{eq:bound2}
    \|\xi-\eta\|_p\le 
    \left(\frac{V_d(K\triangle L)}{\max(V_d(L),V_d(K))}\right)^\frac{1}{p}
    \diam(K\cup L)\,.
  \end{equation}
\end{theorem}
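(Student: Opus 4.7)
The plan is to construct a maximal coupling of uniform distributions on $K$ and $L$ --- that is, a joint law of $(\xi,\eta)$ with the prescribed marginals that maximises $\P(\xi=\eta)$ --- and then exploit the bound $|\xi-\eta|\le\diam(K\cup L)$ on the event where the two variables differ. By the symmetry of the claim, I assume without loss of generality that $V_d(K)\le V_d(L)$, so $\max(V_d(K),V_d(L))=V_d(L)$. Writing $f_K=V_d(K)^{-1}\one_K$ and $f_L=V_d(L)^{-1}\one_L$ for the uniform densities, the pointwise minimum
$$
(f_K\wedge f_L)(x)=V_d(L)^{-1}\one_{K\cap L}(x)
$$
integrates to $p^{\ast}:=V_d(K\cap L)/V_d(L)$.

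Next I construct the coupling on a suitably enlarged probability space: with probability $p^{\ast}$ draw $Z$ uniformly on $K\cap L$ and set $\xi=\eta=Z$; with probability $1-p^{\ast}$ draw $\xi$ and $\eta$ independently with probability densities $(f_K-f_K\wedge f_L)/(1-p^{\ast})$ supported on $K$ and $(f_L-f_K\wedge f_L)/(1-p^{\ast})$ supported on $L$, respectively (this second branch is vacuous when $p^{\ast}=1$). A direct calculation confirms that $\xi\sim\mathrm{Unif}(K)$ and $\eta\sim\mathrm{Unif}(L)$, and by construction $\P(\xi=\eta)\ge p^{\ast}$.

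On the event $\{\xi\ne\eta\}$ both variables lie in $K\cup L$, so $|\xi-\eta|\le\diam(K\cup L)$, while $\xi=\eta$ on the complement. Therefore
$$
\|\xi-\eta\|_p^p\le\P(\xi\ne\eta)\,\diam(K\cup L)^p\le(1-p^{\ast})\,\diam(K\cup L)^p.
$$
Since $V_d(K)\le V_d(L)$ gives $1-p^{\ast}=V_d(L\setminus K)/V_d(L)$ and $V_d(L\setminus K)\le V_d(K\triangle L)$, we obtain
$$
1-p^{\ast}\le\frac{V_d(K\triangle L)}{V_d(L)}=\frac{V_d(K\triangle L)}{\max(V_d(K),V_d(L))},
$$
and taking $p$-th roots yields \eqref{eq:bound2}. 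The only step that requires any verification is the marginal check for the maximal coupling, which is a standard computation; the rest is immediate.
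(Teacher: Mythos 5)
Your proof is correct, but it follows a genuinely different and more elementary route than the paper. The paper first reduces to $p=1$ via the interpolation $\|\xi-\eta\|_p\le \diam(K\cup L)^{(p-1)/p}\|\xi-\eta\|_1^{1/p}$, then invokes optimal transport: the existence of an optimal Monge map, the equivalence of Monge's and Kantorovich's formulations (Pratelli), and Kantorovich duality, bounding $\int f\,\dint\mu-\int f\,\dint\nu$ over $1$-Lipschitz $f$ taking values in $[0,\diam(K\cup L)]$ by splitting the integrals over $K\cap L$, $K\setminus L$ and $L\setminus K$. You instead build the maximal (total-variation) coupling explicitly: the overlap component $f_K\wedge f_L=V_d(L)^{-1}\one_{K\cap L}$ has mass $p^{\ast}=V_d(K\cap L)/V_d(L)$, the marginal check is a one-line computation, and the bound $\E|\xi-\eta|^p\le\P(\xi\ne\eta)\,\diam(K\cup L)^p\le(1-p^{\ast})\diam(K\cup L)^p$ with $1-p^{\ast}=V_d(L\setminus K)/V_d(L)\le V_d(K\triangle L)/\max(V_d(K),V_d(L))$ gives the stated inequality for all $p$ simultaneously, with no reduction to $p=1$ and no appeal to duality. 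What your approach buys is brevity and self-containedness (the only external fact is that a coupling, i.e.\ a joint law with prescribed marginals, can be realised on the nonatomic probability space, which is harmless); what the paper's approach buys is the identification of the quantity being bounded as the $1$-Wasserstein distance, so that the coupling produced is optimal for the $\Lp[1]$ cost rather than merely sufficient. Both arguments land on exactly the same constant, so nothing is lost.
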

\begin{proof}
  It suffices to prove the statement for $p=1$. Indeed, 
  \begin{displaymath}
    \|\xi-\eta\|_{p}
    \le \diam(K\cup L)^{(p-1)/p}\|\xi-\eta\|_1^{1/p}. 
  \end{displaymath}
  Consider Monge's optimal transport problem of finding
  \begin{equation}
    \label{eq:Monge}
    \fC(\mu,\nu)=\inf_{T_\sharp\mu=\nu}\int_{\Rd}\|x-T(x)\| \dint\mu(x)\,,
  \end{equation}
  where $\mu$ and $\nu$ are the uniform distributions on $K$ and $L$,
  respectively, and $T_\sharp\mu$ denotes the push-forward of the
  measure $\mu$ by $T$.  It is known from the theory of optimal mass
  transportation (see, e.g., \cite{amb03} or \cite{vill03}) that the
  infimum in \eqref{eq:Monge} is attained on an optimal transport map
  $T$.  Moreover, under our assumptions, \cite[Th.~B]{pra07} yields
  the equivalence between Monge's transport problem and its
  alternative formulation by Kantorovich. Namely,
  \begin{displaymath}
    \fC(\mu,\nu)=\min_{\gamma\in\Pi(\mu,\nu)}
    \iint_{\Rd\times\Rd} \|x-y\| \dint\gamma(x,y)\,,
  \end{displaymath}
  where $\Pi(\mu,\nu)$ denotes the family of probability measures on
  $\Rd\times\Rd$ with marginals $\mu$ and $\nu$.  In other words,
  $\fC(\mu,\nu)$ is the $1$-Wasserstein distance between $\mu$ and
  $\nu$.  The dual representation of Kantorovich's problem
  (e.g. \cite[Th.~1.14]{vill03}) yields that 
  \begin{equation}
    \label{eq:dual}
    \min_{\gamma\in\Pi(\mu,\nu)}\iint_{\Rd\times\Rd} \|x-y\| \dint\gamma(x,y)
    =\max_{f\in\mathrm{Lip}_1} \left\{\int_{\Rd}f(x)\dint\mu (x)
      -\int_{\Rd}f(x)\dint\nu (x)\right\},
  \end{equation}
  where $\mathrm{Lip}_1$ is the family of $1$-Lipschitz functions on
  $\Rd$.  

  By adding a constant to $f$, one can restrict the
  maximisation in \eqref{eq:dual} to the set of $1$-Lipschitz
  functions with values in $[0,\diam(K\cup L)]$. Then
  \begin{align*}
    \int_{\Rd}f(x)\dint\mu (x)&-\int_{\Rd}f(x)\dint\nu (x)
    =\frac{1}{V_d(K)}\int_K f(x)\dint x-\frac{1}{V_d(L)}\int_L f(x)\dint x\\
    &=\frac{V_d(L)-V_d(K)}{V_d(K)V_d(L)}\int_{K\cap L}f(x)dx
    +\frac{1}{V_d(K)}\int_{K\setminus L}f(x)\dint x
    -\frac{1}{V_d(L)}\int_{L\setminus K}f(x)\dint x\\
    &\leq \left(
      \frac{V_d(L\setminus K)}{V_d(K)}\frac{V_d(K\cap L)}{V_d(L)}+
    \frac{V_d(K\setminus L)}{V_d(K)}\right) \diam(K\cup L)\\
    &\leq 
    \left(\frac{V_d(L\setminus K)}{V_d(K)}+\frac{V_d(K\setminus
      L)}{V_d(K)}\right) \diam(K\cup L)\\
    &= 
    \frac{V_d(K\triangle L)}{V_d(K)}\diam(K\cup L) \,. 
  \end{align*}
  Changing the order of summands, one obtains a similar bound with
  $V_d(K)$ replaced by $V_d(L)$, hence the result. 
\end{proof}

\begin{theorem}
  \label{th:cont}
  Let $\ve$ be a sublinear expectation defined on $\Lp(\R)$ for some
  $p\in [1,\infty)$ and having finite values. Then the map
  $K\mapsto\vE_\ve(K)$ is continuous in the Hausdorff metric.
\end{theorem}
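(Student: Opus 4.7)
The plan is to reduce the continuity of the sublinear transform $K \mapsto \vE_\ve(K)$ to the continuity of the map $\xi \mapsto \vE_\ve(\xi)$ from $\Lp(\R^d)$ to convex bodies, which was established in Theorem~\ref{thr:subl-vE}(vi), by invoking the quantitative coupling provided by Theorem~\ref{th:lip}.

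Assume $K_n \to K$ in the Hausdorff metric as $n \to \infty$. The first step is to observe that, since all $K_n$ and $K$ have nonempty interior and Hausdorff convergence of convex bodies is equivalent to convergence of support functions, we have $V_d(K_n) \to V_d(K) > 0$ and $V_d(K_n \triangle K) \to 0$; moreover, $K_n$ is eventually contained in a fixed ball, so $\diam(K_n \cup K)$ is bounded by a constant $D$ independent of $n$. Consequently, the right-hand side of the bound \eqref{eq:bound2} applied to $K_n$ and $K$ tends to zero:
\begin{displaymath}
  \left(\frac{V_d(K_n \triangle K)}{\max(V_d(K_n),V_d(K))}\right)^{1/p} \diam(K_n \cup K) \longrightarrow 0.
\end{displaymath}

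The second step is to apply Theorem~\ref{th:lip} for each $n$ to obtain, on a common probability space, random vectors $\xi_n$ and $\xi$ uniformly distributed on $K_n$ and $K$, respectively, satisfying $\|\xi_n - \xi\|_p \to 0$. Now Theorem~\ref{thr:subl-vE}(vi) applies (since $\ve$ is assumed to take only finite values on $\Lp(\R)$ and the couplings live in $\Lp(\R^d)$), yielding that $\vE_\ve(\xi_n) \to \vE_\ve(\xi)$ in the Hausdorff metric, which is exactly $\vE_\ve(K_n) \to \vE_\ve(K)$.

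The main conceptual step is simply recognising that the Hausdorff-metric continuity on the level of convex bodies may be lifted from the $\Lp$-continuity on the level of random vectors via a suitable coupling; all the work is packaged into Theorem~\ref{th:lip} and Theorem~\ref{thr:subl-vE}(vi). The only mild technical point is the standard fact that for convex bodies with nonempty interior, Hausdorff convergence entails convergence of volumes and of the symmetric difference to zero, which can be extracted from the inclusions $K \subset K_n + \eps B$ and $K_n \subset K + \eps B$ valid for all sufficiently large $n$ and any $\eps > 0$.
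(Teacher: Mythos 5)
Your proof is correct and follows essentially the same route as the paper: both reduce Hausdorff convergence $K_n\to K$ to $\Lp$-convergence of coupled uniform random vectors via Theorem~\ref{th:lip} (using that Hausdorff convergence of convex bodies with nonempty interior forces the symmetric-difference volumes to vanish and the volumes to stay bounded away from zero), and then conclude by the norm-continuity of $\xi\mapsto\vE_\ve(\xi)$ from Theorem~\ref{thr:subl-vE}(vi). The only point worth making explicit, which the paper also glosses over, is that the coupling can be arranged with a single fixed $\xi$ uniform on $K$ (e.g.\ by taking $\xi_n=T_n(\xi)$ for optimal transport maps $T_n$ pushing the uniform law on $K$ to that on $K_n$); since $\ve$ is law-determined this is immaterial for the conclusion.
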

\begin{proof}
  Note that the convergence of convex bodies (with nonempty
  interiors) in the Hausdorff metric is equivalent to their
  convergence in the symmetric difference metric, see
  \cite{shep:web65}. If $K_n\to K$ in the Hausdorff metric, then
  $\cup_n K_n$ is bounded and $\inf_n V_d(K_n)$ is strictly
  positive. By Theorem~\ref{th:lip}, it is possible to find a sequence
  of random vectors $\{\xi_n,n\geq1\}$ such that $\xi_n$ is uniformly
  distributed on $K_n$ and $\xi_n$ converges in $\Lp$ to a random
  vector $\xi$ uniformly distributed on $K$. The result follows from
  Theorem~\ref{thr:subl-vE}(vi). 
\end{proof}


\begin{example}
  The construction of the sublinear transform can be amended by
  replacing the underlying sublinear expectation $\ve$ with a (not
  necessarily convex) gauge function. For example, if the gauge
  function is a quantile, one obtains the set $D_\alpha(K)$, which is
  the \emph{convex floating body} of $K$, see \cite{bar:lar88} and
  \cite{schut:wer90}.
\end{example}

\subsection{Ulam floating bodies}
\label{sec:ulam-floating-bodies}

Consider the sublinear transform $K\mapsto\vE_\alpha(K)$ generated by
the average quantile sublinear expectation $\ve_\alpha$. Note that
$\vE_1(K)=\{x_K\}$ is the barycentre of $K$ (the expectation of $\xi$
uniformly distributed in $K$), and $\vE_0(K)=K$.

The metronoid $\Met(\mu)$ of the measure $\mu$ with density
$\delta^{-1}\one_K$ is called the \emph{Ulam floating body} of $K$ at
level $\delta$ and is denoted by $\Met_\delta(K)$, see
\cite{huan:slom:wer18}. This measure $\mu$ is the uniform probability
distribution on $K$ scaled by $\delta^{-1}V_d(K)$.
Proposition~\ref{prop:metro-averquant} yields that
\begin{equation}
  \label{eq:19}
  \vE_\alpha(K)=\Met_{\alpha V_d(K)}(K).
\end{equation}
Affine equivariance of sublinear transforms implies that
$M_\delta(cK)=cM_{\delta c^{-d}}(K)$.  Since the uniform probability
distribution on $K$ is log-concave, \eqref{eq:stat2} yields a
relationship between convex floating bodies of $K$ (denoted by
$D_\alpha(K)$) and Ulam floating bodies, proved in
\cite[Th.~1.1]{huan:slom:wer18}.

The following result for $\alpha\in(0,1/2)$ follows from
Theorem~\ref{th:bob}, see also \cite{mey:reis91}. Together with
\eqref{eq:19}, it implies that Ulam floating bodies can be obtained as
Aumann integrals of convex floating bodies. The case $\alpha=1/2$
follows by continuity.

\begin{corollary}
  \label{cor:integral}
  For each origin symmetric convex body $K$ and $\alpha\in(0,1/2]$, we
  have
  \begin{displaymath}
    \vE_\alpha(K)=\frac{1}{\alpha} \int_0^\alpha D_t(K)\dint t. 
  \end{displaymath}
\end{corollary}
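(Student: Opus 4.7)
The plan is to upgrade the chain of inclusions in Theorem~\ref{th:inclusion} to an equality under the additional origin-symmetry hypothesis. Let $\xi$ be uniformly distributed on $K$. Since $K$ has nonempty interior, $\xi$ has full-dimensional support; since $K$ is origin symmetric, $\xi$ is symmetric; and the uniform distribution on a convex body is log-concave, hence $k$-concave with $k=0\geq -1$. Thus all hypotheses of Theorem~\ref{th:bob} are met, giving
\begin{equation*}
  h(D_t(K),u)=q_{1-t}(\langle\xi,u\rangle),\quad u\in\R^d,\ t\in(0,1/2).
\end{equation*}

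Fix $\alpha\in(0,1/2)$. Integrating this identity in $t$ over $(0,\alpha)$ and substituting $s=1-t$ yields
\begin{equation*}
  \int_0^\alpha h(D_t(K),u)\,\dint t=\int_{1-\alpha}^1 q_s(\langle\xi,u\rangle)\,\dint s=\alpha\,\ve_\alpha(\langle\xi,u\rangle)=\alpha\, h(\vE_\alpha(K),u).
\end{equation*}
By definition of the Aumann integral, $h(\alpha^{-1}\int_0^\alpha D_t(K)\,\dint t,u)$ equals $\alpha^{-1}\int_0^\alpha h(D_t(K),u)\,\dint t$, and since two convex bodies with equal support functions coincide, the identity follows for all $\alpha\in(0,1/2)$.

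For $\alpha=1/2$ one argues by continuity. The right-hand side depends continuously on $\alpha$ by dominated convergence applied to \eqref{eq:3} (the quantile function is bounded, as $\xi$ is supported in the compact set $K$). The left-hand side is also continuous in $\alpha$: the family $\{D_t(K):\ t\in(0,1/2)\}$ is decreasing in $t$ and uniformly bounded by $K$, so the support function $\alpha^{-1}\int_0^\alpha h(D_t(K),u)\,\dint t$ varies continuously in $\alpha$. Passing to the limit $\alpha\uparrow 1/2$ extends the equality to the endpoint. The only real substance in the proof is the verification of the hypotheses of Theorem~\ref{th:bob}; once that is in hand, the statement is essentially a bookkeeping exercise, and the continuity extension to $\alpha=1/2$ is standard.
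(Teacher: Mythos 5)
Your proposal is correct and follows essentially the same route as the paper, which derives the case $\alpha\in(0,1/2)$ directly from Theorem~\ref{th:bob} (whose hypotheses you verify correctly: symmetry, log-concavity, full-dimensional support) and handles $\alpha=1/2$ by continuity; you merely spell out the integration and the Aumann-integral bookkeeping that the paper leaves implicit. The only blemish is that in your final paragraph the labels ``left-hand side'' and ``right-hand side'' are swapped relative to the displayed identity, but both continuity claims are stated and correct in substance.
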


Hence, $\alpha\vE_\alpha(K)$ grows in $\alpha$ for $\alpha\in(0,1/2]$,
equivalently, the dilated Ulam floating body $tM_t(K)$ grows for
$t\in(0,V_d(K)/2]$. 

The next result follows from Theorem~\ref{th:representation}; it
implies that Ulam floating bodies are building blocks for all
sublinear transforms.

\begin{corollary}
  \label{cor:ulam}
  For each law-determined sublinear expectation $\ve$, the
  corresponding sublinear transform $\vE_\ve$ can be represented as
  \begin{equation}
    \label{eq:7}
    \vE_\ve(K)=\conv \bigcup_{\nu\in\sP_\ve} \int_{(0,1]}
    \vE_\alpha(K)\nu(\dint\alpha),
  \end{equation} 
  where $\nu$ runs through a family $\sP_\ve$ of probability measures
  on $(0,1]$ that yields the Kusuoka representation of $\ve$, see
  \eqref{eq:12}.
\end{corollary}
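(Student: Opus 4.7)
The plan is to observe that Corollary~\ref{cor:ulam} is essentially a specialisation of Theorem~\ref{th:representation} to the setting of Section~\ref{sec:conv-bodi-gener-1}, where the random vector is uniformly distributed on a convex body. The only bookkeeping needed is to match the notation between the two statements, using the convention $\vE_\ve(K) = \vE_\ve(\xi)$ for $\xi$ uniform on $K$, and similarly for the average quantile sets $\vE_\alpha(K)$.

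More concretely, I would let $\xi$ be uniformly distributed on $K$ and directly invoke Theorem~\ref{th:representation}, which gives
\begin{displaymath}
\vE_\ve(\xi)=\conv\bigcup_{\nu\in\sP_\ve}\int_{(0,1]}\vE_\alpha(\xi)\nu(\dint\alpha),
\end{displaymath}
where $\sP_\ve$ is the Kusuoka family of probability measures on $(0,1]$. Substituting the notational identities $\vE_\ve(\xi)=\vE_\ve(K)$ and $\vE_\alpha(\xi)=\vE_\alpha(K)$ yields \eqref{eq:7}. One should also verify the integrability hypothesis of Theorem~\ref{th:representation}: since $K$ is a bounded convex body, $\xi$ is in $\Lp[\infty](\R^d)\subset\Lp[1](\R^d)$, so all average quantile sets $\vE_\alpha(\xi)$ are well defined and bounded, and the Aumann integrals make sense.

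The main (really, the only) conceptual step is to recognise that the floating-like body construction $K\mapsto\vE_\ve(K)$ is just $\vE_\ve$ applied to the uniform distribution on $K$, so that every structural result proved for general $p$-integrable random vectors in Section~\ref{sec:conv-bodi-gener} transfers verbatim. There is no real obstacle here; the corollary is a translation of the earlier theorem into the language of sublinear transforms of convex bodies, with the point being that, via Proposition~\ref{prop:metro-averquant} and \eqref{eq:19}, each $\vE_\alpha(K)$ is an Ulam floating body, whence the final emphasis that Ulam floating bodies are the building blocks of all sublinear transforms $\vE_\ve(K)$.
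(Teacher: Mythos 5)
Your proof is correct and is exactly the paper's intended argument: the corollary is stated as an immediate consequence of Theorem~\ref{th:representation} applied to $\xi$ uniformly distributed on $K$, with the notational identification $\vE_\ve(K)=\vE_\ve(\xi)$ and $\vE_\alpha(K)=\vE_\alpha(\xi)$. Your additional remarks on integrability and the link to Ulam floating bodies via \eqref{eq:19} are consistent with the surrounding text.
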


It is possible to replace $\vE_\alpha$ with $M_{\alpha V_d(K)}$ on the
right-hand side of \eqref{eq:7}. While the
integration domain in \eqref{eq:7} excludes $0$, it is always possible
to approximate $\vE_0(K)=K$ by a sequence $\vE_{\alpha_n}(K)$ as
$\alpha_n\downarrow0$. Thus, the Kusuoka representation can be
equivalently written using probability measures on $[0,1]$.

\begin{example}
  \label{ex:non-monotone}
  The map $K\mapsto\vE_\alpha(K)$ is not necessarily monotone. An easy
  counterexample is provided by two segments $[0,1]$ and $[0,2]$ on
  the line. However, the monotonicity fails even for origin symmetric
  convex bodies. Consider two convex bodies on the plane:
  $L=[-a,a]\times[-\eps,\eps]$ with $a+\eps\leq 1$ and the
  $\ell_1$-ball $K$. We show that for suitable values of $a$ and
  $\alpha$, the support function of $\vE_\alpha(L)$ is not smaller
  than the support function of $\vE_\alpha(K)$ in direction
  $u=(1,0)$. Let $\beta=\langle\xi,u\rangle$ for $\xi$ uniformly
  distributed in $K$. Note that $\gamma=\langle \eta,u\rangle$ is
  uniformly distributed on $[-a,a]$ if $\eta$ is uniform on $L$. The
  quantile functions are
  \begin{displaymath}
    q_t(\beta)=1-\sqrt{2(1-t)},\quad q_t(\gamma)=(2t-1)a,\quad t\in[1/2,1].
  \end{displaymath}
  For $\alpha\in[0,1/2]$,
  \begin{displaymath}
    \ve_\alpha(\beta)=1-2\sqrt{2}\alpha^{1/2}/3
  \end{displaymath}
  and 
  \begin{displaymath}
    \ve_\alpha(\gamma)=a(1-\alpha).
  \end{displaymath}
  If $\alpha=1/2$, then $\ve_\alpha(\beta)<\ve_\alpha(\gamma)$ if
  $\frac{2}{3}<a<1$,
  meaning that $\vE_\alpha(L)$ is not necessarily a subset of
  $\vE_\alpha(K)$.

  The monotonicity of Ulam floating body transform (which easily
  follows from Proposition~2.1 of \cite{huan:slom:wer18}) implies that,
  after normalising by volume, $\vE_\alpha$ becomes monotone, namely,
  \begin{displaymath}
    \vE_{\alpha/V_d(K)}(K)\subset \vE_{\alpha/V_d(L)}(L), \quad
    0\leq\alpha\leq V_d(K),
  \end{displaymath}
  if $K\subset L$. 
\end{example}

If the family $\sP_\ve$ in \eqref{eq:7} consists of a single measure
$\nu$, we obtain a convex body $\vE_{\int\!\!\phi}(K)$ generated by
the spectral sublinear expectation $\ve_{\int\!\!\phi}$, where $\phi$
is the spectral function related to $\nu$ by~\eqref{eq:13}. Recall
that the maximum extension of the average quantile is a spectral
sublinear expectation, see Example~\ref{ex:max-ext}.

\begin{example}
  \label{ex:exp-poly}
  Consider the sublinear expectation $\ve^{\vee m}_1$ given by
  \eqref{eq:max}.  Note that
  \begin{displaymath}
    \max(\langle u,\xi_1\rangle,\dots,\langle u,\xi_m\rangle)
    =h(P_m,u),
  \end{displaymath}
  where $P_m=\conv(\xi_1,\dots,\xi_m)$ is the convex hull of
  independent copies of $\xi$.  Then $\E h(P_m,u)$ is the support
  function of the expectation $\E P_m$ of the random polytope $P_m$,
  see \cite[Sec.~2.1]{mo1}. Therefore, $\vE^{\vee m}_1(K)=\E
  P_m$. Asymptotic properties of these expected polytopes and their
  relation to floating bodies have been studied in \cite{fres:vit14},
  see also \cite{fres13}.  If $m=1$, then $\vE_1(K)=\{x_K\}$ is the
  barycentre of $K$. The calculation  in Example~\ref{ex:max-ext}
  yields that
  \begin{align*}
    \E P_m=\vE_1^{\vee m}(K)&=m(m-1)\int_{(0,1]}\vE_\alpha(K)
            \alpha(1-\alpha)^{m-2}\dint \alpha\\
    &=m(m-1)\int_{(0,1]}\Met_{\alpha V_d(K)}(K)
            \alpha(1-\alpha)^{m-2}\dint \alpha.
  \end{align*}
  Hence, the expected random polytope equals the weighted integral of
  Ulam floating bodies. 
\end{example}

More generally, $\vE_\alpha^{\vee m}(K)$ is obtained by applying
\eqref{eq:alpha-m} as follows
\begin{multline*}
  \vE_\alpha^{\vee m}(K)
  =\frac{m(m-1)}{\alpha}
  \int _0^{1-(1-\alpha)^{1/m}} t(1-t)^{m-2}\vE_t(K)\dint t\\
  +\frac{m}{\alpha}(1-\alpha)^{(m-1)/m}(1-(1-\alpha)^{1/m})
  \vE_{1-(1-\alpha)^{1/m}}(K).
\end{multline*}

\subsection{Centroid bodies and the expectile transform}
\label{sec:centroid-bodies}

If $\ve_{p,a}$ is defined by \eqref{eq:norm} for $p\in[1,\infty)$,
then the corresponding floating-like body $\vE_{p,a}(K)$ has the
support function
\begin{equation}
  \label{eq:15}
  h(\vE_{p,a}(K),u)=\langle x_K,u\rangle
  + a\Big(\E(\langle\xi-x_K,u\rangle)_+^p\Big)^{1/p},
\end{equation}
where $\xi$ is uniformly distributed on $K$ and $x_K=\E\xi$ is the
barycentre of $K$.

If $K$ is origin-symmetric, then $x_K=0$ and
\begin{displaymath}
  \vE_{p,1}(K)=c \Gamma_pK,  
\end{displaymath}
where $c>0$ is an explicit constant depending on $p$ and dimension and
$\Gamma_pK$ is the \emph{$\Lp$-centroid body} of $K$, see \cite{lut90}
for $p=1$ and \cite{lut:zhan97} for general $p$. For a not necessarily
origin symmetric $K$, this convex body is defined as
\begin{displaymath}
  h(\Gamma_pK,u)=\left(\frac{1}{c_{d,p}V_d(K)}
    \int_K |\langle u,y\rangle|^p \dint y\right)^{1/p},
\end{displaymath}
where $c_{d,p}$ is a constant chosen to ensure that this
transformation does not change the unit Euclidean ball, see
\cite[Eq.~(10.72)]{schn2}.
For $p=1$, $a=1$ and an origin symmetric $K$,
\begin{displaymath}
  \vE_{1,1}(K)=\frac{1}{2} \Gamma K,
\end{displaymath}
where $\Gamma K$ is the classical \emph{centroid body} of $K$, see
\cite[Eq.~(10.67)]{schn2} and \cite{lut90}.  The dual representation
of $\ve_{1,1}$ from Example~\ref{ex:fischer} yields that
\begin{displaymath}
  \Gamma K=2\vE_{1,1}(K)=\conv \{\E(\gamma\xi):\; \gamma\in[0,2]\}.
\end{displaymath}
The right-hand side is the expectation of the random convex body
$[0,2\xi]$ being the segment in $\R^d$ with end-points at the origin
and $2\xi$, see \cite[Sec.~2.1]{mo1}. 

The \emph{asymmetric $\Lp$-moment body} $\Met_p^+K$ introduced in
\cite{hab:sch09} (see also \cite[Eq.~(10.76)]{schn2}) has the support
function proportional to
\begin{displaymath}
  \int_K (\langle x,y\rangle)_+^p \dint y. 
\end{displaymath}
Thus,
\begin{displaymath}
  \vE_{p,a}(K)=x_K+c_1a\Met_p^+(K-x_K)
\end{displaymath}
for a constant $c_1$ depending on $p\in[1,\infty)$ and
dimension.

Corollary~\ref{cor:dual-E} and the dual representation of $\ve_{p,a}$
from Example~\ref{ex:fischer} (see also \cite[p.~46]{del12}) yield
that the asymmetric $\Lp$-moment bodies with $p\in[1,\infty)$ can be
represented in terms of 
\begin{displaymath}
  \vE_{p,a}(K)=x_K+a \cl\big\{\E((\gamma-\E\gamma)\xi):\; 
  \gamma\in\Lp[q](\R_+), \|\gamma\|_q\leq 1\big\}. 
\end{displaymath}

Furthermore, Corollary~\ref{cor:ulam} shows that each $\Lp$-centroid
body of an origin symmetric $K$ equals the convex hull of a family of
integrated Ulam floating bodies of $K$. This representation can be
made very explicit in case $p=1$; it follows from
Theorem~\ref{thr:law-inv-rep} combined with the results presented in
Example~\ref{ex:fischer}. Namely,
\begin{equation}
  \label{eq:18}
  \vE_{1,a}(K)=x_K+ a\conv\bigcup_{t\in[0,1]} t\vE_t(K-x_K).
\end{equation}
The following result specialises the above
relationship for centroid bodies.

\begin{corollary}
  \label{cor:centroid}
  If $K$ is an origin symmetric convex body, then its centroid body
  $\Gamma K$ satisfies
  \begin{equation}
    \label{eq:16}
    \Gamma K=\frac{2}{V_d(K)} \conv\bigcup_{t\in[0,V_d(K)]} t
    \Met_{t}(K).
  \end{equation}
\end{corollary}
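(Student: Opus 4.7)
The plan is to chain together three identities already established in the paper and then perform a change of variables.

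First, since $K$ is assumed origin symmetric, the barycentre $x_K = 0$, and the earlier observation (in Section~\ref{sec:centroid-bodies}) gives $\Gamma K = 2\vE_{1,1}(K)$. So the task reduces to rewriting $\vE_{1,1}(K)$ in terms of Ulam floating bodies.

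Next, I would apply \eqref{eq:18} with $a = 1$ and $x_K = 0$ to obtain
\begin{equation*}
  \vE_{1,1}(K) = \conv\bigcup_{t\in[0,1]} t\,\vE_t(K).
\end{equation*}
The identification \eqref{eq:19} of the average quantile body with an Ulam floating body gives $\vE_t(K) = \Met_{tV_d(K)}(K)$ for each $t\in(0,1]$, and the endpoint $t=0$ is handled by the convention $\vE_0(K) = K$ (which matches $\Met_{V_d(K)}(K)$ up to a limit that is absorbed by the closed convex hull, since the contribution $t\vE_t(K)$ vanishes at $t=0$).

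Finally, I would perform the substitution $s = tV_d(K)$, so that $t\in[0,1]$ becomes $s\in[0,V_d(K)]$ and
\begin{equation*}
  t\,\vE_t(K) = \frac{s}{V_d(K)}\Met_s(K).
\end{equation*}
Using the elementary identity $\conv\bigcup_s \lambda A_s = \lambda\,\conv\bigcup_s A_s$ for $\lambda > 0$, the factor $1/V_d(K)$ pulls out of the convex hull, and multiplying by $2$ yields \eqref{eq:16}.

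There is no real obstacle here; this is essentially a bookkeeping exercise combining the previously established formulas \eqref{eq:18} and \eqref{eq:19} with the identity $\Gamma K = 2\vE_{1,1}(K)$ for origin symmetric bodies. The only mild point to check is that the rescaling $s = tV_d(K)$ and the affine equivariance relation $\Met_\delta(cK) = c\Met_{\delta c^{-d}}(K)$ are compatible with pulling the positive factor outside of the closed convex hull, which is immediate.
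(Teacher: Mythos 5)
Your argument is correct and coincides with the paper's (implicit) derivation: the corollary is presented as a direct specialisation of \eqref{eq:18} (with $a=1$, $x_K=0$) combined with $\Gamma K=2\vE_{1,1}(K)$ and the identification \eqref{eq:19}, followed by the rescaling $s=tV_d(K)$, which is exactly the chain you carry out. The only blemish is the parenthetical claim that $\vE_0(K)=K$ ``matches $\Met_{V_d(K)}(K)$'': under \eqref{eq:19} the endpoint $t=0$ corresponds to $\Met_0(K)$, not $\Met_{V_d(K)}(K)$, but as you yourself note the factor $t=0$ annihilates that term, so nothing is affected.
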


Since $K$ is origin symmetric, $t\vE_t(K)=\int_0^t D_s(K)\dint s$
grows in $t\in(0,1/2]$, see Corollary~\ref{cor:integral}. Thus, the
union in \eqref{eq:16} can be reduced to $t\in[V_d(K)/2,1]$.

\begin{example}
  The definition of the Orlicz centroid bodies from
  \cite{lut:yan:zhan10} can be also incorporated in our setting using
  the sublinear expectation 
  \begin{displaymath}
    \ve(\beta)=\inf\{\lambda>0:\; \E \psi(\beta/\lambda)\leq 1\},
  \end{displaymath}
  where $\psi:\R\to[0,\infty)$ is a convex function with $\psi(0)=0$
  and such that $\psi$ is strictly increasing on the positive
  half-line or strictly decreasing on the negative half-line. This
  sublinear expectation is the norm of $\beta$ in the corresponding
  Orlicz space.
\end{example}

\begin{example}
  Consider the expectile $\ve_{[\tau]}$ defined in
  Example~\ref{ex:expectile} with parameter $\tau\in(0,1/2]$.
  In view of the results presented in Example~\ref{ex:expectile}, the
  corresponding floating-like body $\vE_{[\tau]}(K)$ can be
  represented as
  \begin{equation}
    \label{eq:17}
    \vE_{[\tau]}(K)
    =x_K+\conv\bigcup_{t\in[0,V_d(K)]}
    \frac{t(2\tau-1)}{t(2\tau-1)+(1-\tau)V_d(K)}
    \big(\Met_t(K)-x_K\big).
  \end{equation}
\end{example}

Representations \eqref{eq:16} and \eqref{eq:17} suggest looking at the
transform of convex bodies given by
\begin{displaymath}
  K\mapsto x_K+\conv\bigcup_{t\in[0,V_d(K)]} \psi(t)\big(\Met_t(K)-x_K\big)
\end{displaymath}
for a function $\psi:\R_+\to\R_+$. As demonstrated above, this
transform relates the centroid body transform and the expectile
transform to the Ulam floating body transform.

\subsection{Open problems related to the sublinear transform}
\label{sec:open-problems}

Several calculated examples suggest that
$\vE_\alpha(K+L)\subset \vE_\alpha(K)+\vE_\alpha(L)$, and we
conjecture that this is the case. It is easy to see that this holds on
the line for a general sublinear transform.

It was shown in \cite{guo:len:lin18} that the equality of two
symmetric $p$-centroid bodies for $p$ not being an even integer yields
the equality of the corresponding sets. This question is open for Ulam
floating bodies, see \cite{huan:slom:wer18}, not to say also for 
general floating-like bodies.

It is obvious that $\vE_\ve(K)$ is a dilate of $K$ if $K$ is an
ellipsoid. This question has been explored for convex floating bodies,
see \cite{MR2952407} and references therein. However, the case of Ulam
floating body seems to be open, as well as the case of general
sublinear transform. 

There is a substantial theory of conditional (dynamic) sublinear
expectations, e.g., constructed using backwards stochastic
differential equations, see \cite{pen19}. By applying conditional
sublinear expectations to $\xi$ uniformly distributed in $K$, one
comes up with stochastic processes whose values are convex
bodies. Further investigation of such processes is left for future
work.

\section*{Acknowledgement}
\label{sec:acknowledgement}

The authors are grateful to the referee for a very careful reading of
the paper and suggesting numerous improvements.


\end{document}